\documentclass[numbook,envcountsame,smallcondensed]{amsart}

\usepackage{amssymb,amsmath,amsfonts,mathrsfs,cite,mathptmx,graphicx,gastex,url}

\DeclareMathOperator{\alf}{alph}
\DeclareMathOperator{\simple}{sim}
\DeclareMathOperator{\mul}{mul}
\DeclareMathOperator{\occ}{occ}

\newtheorem{theorem}{Theorem}
\newtheorem{proposition}[theorem]{Proposition}
\newtheorem{lemma}[theorem]{Lemma}

\newtheorem{claim}{Claim}

\newcommand{\Mon}{\mathbb{M}\text{\scriptsize$\mathbb{ON}$}}
\newcommand{\Sem}{\mathbb{S}\text{\scriptsize$\mathbb{EM}$}}
\newcommand{\vMon}{\mathbf{M}\text{\scriptsize$\mathbf{ON}$}}

%\numberwithin{equation}{section}

\makeatletter

\renewcommand*\subjclass[2][2010]{\def\@subjclass{#2}\@ifundefined{subjclassname@#1}{\ClassWarning{\@classname}{Unknown edition (#1) of Mathematics Subject Classification; using '2010'.}}{\@xp\let\@xp\subjclassname\csname subjclassname@#1\endcsname}}

\renewcommand{\subjclassname}{\textup{2010} Mathematics Subject Classification}

\makeatother

\begin{document}

\title{Modular elements in the lattice of monoid varieties}
\thanks{The research was supported by the Ural Mathematical Center, Project No. 075-02-2025-1719/1.}
%\thanks{Supported by the Ministry of Science and Higher Education of the Russian Federation (project FEUZ-2023-0022).}

\author{Sergey V. Gusev}

\address{Ural Federal University, Institute of Natural Sciences and Mathematics, Lenina 51, Ekaterinburg 620000, Russia}

\email{sergey.gusb@gmail.com}

\begin{abstract}
An element $x$ of a lattice $L$ is modular if $L$ has no five-element sublattice isomorphic to the pentagon in which $x$ would correspond to the lonely midpoint.
In the present work, we classify all modular elements of the lattice of all monoid varieties.
\end{abstract}

\keywords{Monoid, variety, lattice, modular element.}

\subjclass{Primary 20M07, secondary 08B15}

\maketitle

\section{Introduction and summary}
\label{Sec: introduction}

An element $x$ of a lattice $L$ is \textit{modular} if it makes the formula
\[
\forall y,z\in L:\,y\le z\longrightarrow(x\vee y)\wedge z=(x\wedge z)\vee y
\]
hold true. 
Modular elements enjoy a distinguished position in lattices.
The interest in them is explained by the fact that modular elements of $L$ are exactly ones that are not the central elements of any pentagon (that is, a five-element non-modular sublattice shown in Fig.~\ref{pentagon}) of $L$ (see~\cite[Proposition~2.1]{Jezek-81}).  

\begin{figure}[htb]
\begin{center}
\unitlength=1mm
\linethickness{0.4pt}
\begin{picture}(20,31)
\put(10,0){\circle*{1.33}}
\put(0,10){\circle*{1.33}}
\put(0,20){\circle*{1.33}}
\put(20,15){\circle*{1.33}}
\put(10,30){\circle*{1.33}}

\gasset{AHnb=0,linewidth=0.4}
\drawline(10,0)(0,10)(0,20)(10,30)(20,15)(10,0)
\end{picture}
\caption{}
\label{pentagon}
\end{center}
\end{figure}

Modular elements played a crucial role in the study of first-order definability in the lattice $\Sem$ of all semigroup varieties~\cite{Jezek-McKenzie-93}. 
Although a complete description of modular elements in $\Sem$ is still unknown, a number of profound results have been obtained in this direction.
In particular, the set of all modular elements in $\Sem$ is uncountably infinite.
More information can be found in the comprehensive survey article~\cite{Vernikov-15} in the context of studying special elements of various types in the lattice $\Sem$.

The present article is concerned with modular elements of the lattice $\Mon$ of all varieties of monoids, i.e., semigroups with an identity element. 
Even though monoids are very similar to semigroups, the situation turns out to be very different.
Compared with lattice $\Sem$, the systematic study of lattice $\Mon$ has began relatively recently, although the first results in this direction were obtained back in the late 1960s.
In particular, the problem of describing modular elements of the lattice $\Mon$ remained open (see Section~9.1 in the recent survey~\cite{Gusev-Lee-Vernikov-22}).
The goal of the present note is a complete solution to this problem.
We present an exhaustive countably infinite list of monoid varieties that are modular elements of the lattice $\Mon$.

Let us briefly recall a few notions that we need to formulate our main result.
Let $\mathcal X$ be a countably infinite set called an \textit{alphabet}. 
As usual, let~$\mathcal X^\ast$ denote the free monoid over the alphabet~$\mathcal X$. 
Elements of~$\mathcal X$ are called \textit{letters} and elements of~$\mathcal X^\ast$ are called \textit{words}.
We treat the identity element of~$\mathcal X^\ast$ as \textit{the empty word}, which is denoted by~$1$.  
Words and letters are denoted by small Latin letters. 
However, words unlike letters are written in bold. 
An identity is written as $\mathbf u \approx \mathbf v$, where $\mathbf u,\mathbf v \in \mathcal X^\ast$; it is \textit{non-trivial} if $\mathbf u \ne \mathbf v$.
A variety $\mathbf V$ \textit{satisfies} an identity $\mathbf u \approx \mathbf v$, if for any monoid $M\in \mathbf V$ and any substitution $\varphi\colon \mathcal X \to M$, the equality $\varphi(\mathbf u)=\varphi(\mathbf v)$ holds in $M$.

For any set $\mathcal W$ of words, let $M(\mathcal W)$ denote the Rees quotient monoid of $\mathcal X^\ast$ over the ideal of all words that are not subwords of any word in $\mathcal W$.
Given a set $\mathcal W$ of words, let $\mathbf M(\mathcal W)$ denote the monoid variety generated by $M(\mathcal W)$.
For brevity, if $\mathbf w_1,\dots,\mathbf w_k\in\mathcal X^\ast$, then we write $M(\mathbf w_1,\dots,\mathbf w_k)$ [respectively, $\mathbf M(\mathbf w_1,\dots,\mathbf w_k)$] rather than $M(\{\mathbf w_1,\dots,\mathbf w_k\})$ [respectively, $\mathbf M(\{\mathbf w_1,\dots,\mathbf w_k\})$].

As usual, let $\mathbb N$ denote the set of all natural numbers. 
For any $n\in\mathbb N$, we denote by $S_n$ the full symmetric group on the set $\{1,\dots,n\}$.  
For any $n,m\in\mathbb N$ and $\rho\in S_{n+m}$, we define the words:
\begin{align*}
\mathbf a_{n,m}[\rho]&:=\biggl(\prod_{i=1}^n z_it_i\biggr)x\biggl(\prod_{i=1}^{n+m} z_{i\rho}\biggr)x\biggl(\prod_{i=n+1}^{n+m} t_iz_i\biggr),\\
\mathbf a_{n,m}^\prime[\rho]&:=\biggl(\prod_{i=1}^n z_it_i\biggr)\biggl(\prod_{i=1}^{n+m} z_{i\rho}\biggr)x^2\biggl(\prod_{i=n+1}^{n+m} t_iz_i\biggr).
\end{align*}
We denote by $\vMon$ the variety of all monoids.

Our main result is the following

\begin{theorem}
\label{T: modular}
For a monoid variety $\mathbf V$ the following are equivalent:
\begin{itemize}
\item[\textup{(i)}] $\mathbf V$ is a modular element of the lattice $\Mon$;
\item[\textup{(ii)}] $\mathbf V$ is either the variety $\vMon$ or satisfies the identities
\[
\begin{aligned}
&x^2\approx x^3,\,x^2y\approx yx^2,\,xyzxty\approx yxzxty,\,xzytxy\approx xzytyx,\\
&xzxtxysy\approx xzxtyxsy,\,xzxytysy\approx xzyxtysy,\,\mathbf a_{n,m}[\rho]\approx \mathbf a_{n,m}^\prime[\rho]
\end{aligned}
\]
for all $n,m\in\mathbb N$ and $\rho\in S_{n+m}$.
\item[\textup{(iii)}] $\mathbf V$ coincides with one of the varieties 
\begin{align*}
&\mathbf M(\emptyset),\,\mathbf M(1),\mathbf M(x),\,\mathbf M(xy),\,\mathbf M(xt_1x),\,\dots,\, \mathbf M(xt_1x\cdots t_nx),\,\dots,\,\mathbf M(\{xt_1x\cdots t_nx\mid n\in\mathbb N\}),\\
&\mathbf M(xzxyty,xt_1x),\,\dots,\, \mathbf M(xzxyty,xt_1x\cdots t_nx),\,\dots,\,\mathbf M(\{xzxyty,xt_1x\cdots t_nx\mid n\in\mathbb N\}),\,\vMon.
\end{align*}
\end{itemize}
\end{theorem}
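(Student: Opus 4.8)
The plan is to split the equivalence into the three claims (ii)$\Leftrightarrow$(iii), (i)$\Rightarrow$(ii) and (iii)$\Rightarrow$(i), which together close the loop.

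The equivalence (ii)$\Leftrightarrow$(iii) amounts to classifying the monoid varieties satisfying the displayed identity system. The implication (iii)$\Rightarrow$(ii) is a direct verification: one checks that the generating Rees quotient monoid of each variety on the list satisfies all seven of the identity families, the checks for $\mathbf a_{n,m}[\rho]\approx\mathbf a_{n,m}^\prime[\rho]$ reducing to a comparison of the sets of bounded-length subwords of the two sides. For (ii)$\Rightarrow$(iii) one exploits the rigidity of the basis. The identities $x^2\approx x^3$ and $x^2y\approx yx^2$ force $\mathbf V$ into the well-charted region of aperiodic monoid varieties in which every square is a central idempotent; there the structure of the subvariety lattice is well understood, and a variety in this region is largely controlled by which monoids $M(\mathbf w)$ it contains. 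A deletion-and-substitution analysis then shows that the four ``middle'' identities together with the family $\mathbf a_{n,m}[\rho]\approx\mathbf a_{n,m}^\prime[\rho]$ cut this set of words down so that it must consist of all subwords of one of $1$, $x$, $xy$, the words $xt_1x\cdots t_nx$ (possibly for every $n$), or the same together with $xzxyty$, or of all words; unwinding these options yields exactly the list in (iii), the last option being $\vMon$. I expect this step to be largely bookkeeping, modulo known descriptions of the relevant interval of $\Mon$.

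For (i)$\Rightarrow$(ii) I argue the contrapositive: assuming $\mathbf V\ne\vMon$ and that $\mathbf V$ fails some identity from the system in (ii), I must produce a pentagon of $\Mon$ with $\mathbf V$ as its lonely midpoint, that is, monoid varieties $\mathbf A<\mathbf B$ with $\mathbf A\vee\mathbf V=\mathbf B\vee\mathbf V$ and $\mathbf A\wedge\mathbf V=\mathbf B\wedge\mathbf V$ but $\mathbf A\not\le\mathbf V\not\le\mathbf B$. The idea is to assemble $\mathbf A$ and $\mathbf B$ from the monoids $M(\mathbf w)$ for finitely many words $\mathbf w$ tailored to the failing identity $\mathbf u\approx\mathbf v$ — the words $\mathbf u,\mathbf v$ themselves and small modifications of them — in such a way that $\mathbf B$ arises from $\mathbf A$ by adjoining exactly one extra identity, chosen so as to be absorbed both under meeting with $\mathbf V$ (since $\mathbf V$ already fails $\mathbf u\approx\mathbf v$) and under joining with $\mathbf V$ (since $\mathbf A$ is taken large enough). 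Only a few shapes have to be treated: one for each of the six non-parametric identities and a single construction, uniform in $n$, $m$ and $\rho$, for the whole family $\mathbf a_{n,m}[\rho]\approx\mathbf a_{n,m}^\prime[\rho]$. Verifying the meet and join equalities in each of these pentagons is the technical crux here and leans on the standard machinery for computing joins of monoid varieties of this kind.

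Lastly, for (iii)$\Rightarrow$(i) I must show that every variety on the list is a modular element. The extremes are immediate, $\mathbf M(\emptyset)$ and $\vMon$ being the least and greatest elements of $\Mon$. For any other variety $\mathbf V$ on the list I fix $\mathbf A\le\mathbf B$ in $\Mon$ and prove $(\mathbf V\vee\mathbf A)\wedge\mathbf B\le(\mathbf V\wedge\mathbf B)\vee\mathbf A$, the reverse inequality holding in every lattice. The leverage comes from the fact that each such $\mathbf V$ is small enough to admit an explicit criterion for which identities hold in $\mathbf V\vee\mathbf A$ — roughly, an identity holds in $\mathbf V\vee\mathbf A$ precisely when it holds in $\mathbf A$ and a prescribed $\mathbf V$-dependent simplification of it holds outright — so that a word witnessing a failure of the displayed inequality can be traced back to a contradiction. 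The small varieties $\mathbf M(\emptyset),\mathbf M(1),\mathbf M(x),\mathbf M(xy)$ are handled individually; the two chains $\mathbf M(xt_1x\cdots t_nx)$ and $\mathbf M(xzxyty,xt_1x\cdots t_nx)$ by a common induction on $n$; and their limits $\mathbf M(\{xt_1x\cdots t_nx\mid n\in\mathbb N\})$ and $\mathbf M(\{xzxyty,xt_1x\cdots t_nx\mid n\in\mathbb N\})$ by passing to the limit. I expect this implication — specifically, obtaining enough control over $\mathbf V\vee\mathbf A$ for an arbitrary $\mathbf A$ when $\mathbf V$ is one of the two non-finitely-generated chain limits — to be the main obstacle of the whole proof.
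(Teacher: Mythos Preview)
Your overall scheme is close to the paper's, and the (i)$\Rightarrow$(ii) portion is essentially the same idea: assume a listed identity fails in $\mathbf V$, and manufacture a pentagon around $\mathbf V$ out of varieties of the form $\mathbf M(\mathbf u,\mathbf v)$ (intersected with a variety defined by a single identity). One small slip: you write that ``$\mathbf B$ arises from $\mathbf A$ by adjoining exactly one extra identity'' while also requiring $\mathbf A<\mathbf B$; adjoining an identity shrinks a variety, so these are inconsistent as stated. In the paper the comparable pair is $\mathbf X\subset\mathbf M(\mathbf u,\mathbf v)$ with $\mathbf X=\mathbf M(\mathbf u,\mathbf v)\wedge\mathbf X'$, where $\mathbf X'$ is defined by the single identity $\mathbf u\approx\mathbf v$; modularity is then violated via $(\mathbf X\vee\mathbf V)\wedge\mathbf M(\mathbf u,\mathbf v)=\mathbf M(\mathbf u,\mathbf v)$ but $\mathbf X\vee(\mathbf V\wedge\mathbf M(\mathbf u,\mathbf v))\subsetneq\mathbf M(\mathbf u,\mathbf v)$. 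The delicate part is verifying that $\mathbf u,\mathbf v$ are isoterms for $\mathbf X\vee\mathbf V$, which the paper does via ad hoc ``the only words $\mathbf u,\mathbf v$ can form identities with in $\mathbf X'$ are each other'' claims; you should expect the constructions to be this explicit. Also note you do not need the separate (iii)$\Rightarrow$(ii) direction; the cycle (i)$\Rightarrow$(ii)$\Rightarrow$(iii)$\Rightarrow$(i) suffices.

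The substantive divergence is in (iii)$\Rightarrow$(i). You propose to verify the modular law directly, by controlling $\mathbf V\vee\mathbf A$ for arbitrary $\mathbf A$, doing induction on $n$ along the two chains and then passing to the limit; you flag the two non-finitely-generated limits as the main obstacle. The paper bypasses all of this. It argues by contradiction: take $\mathbf X\subset\mathbf Y$ with $\mathbf X\vee\mathbf V=\mathbf Y\vee\mathbf V$ and $\mathbf X\wedge\mathbf V=\mathbf Y\wedge\mathbf V$, and show $\mathbf X=\mathbf Y$. The key lemma is that any identity of $\mathbf X$ which already holds in $\mathbf M(\{xt_1x\cdots t_nx\mid n\in\mathbb N\})$ must hold in $\mathbf Y$; this is proved uniformly in $\mathbf V$ using only that $M(xzxyty)\in\mathbf V$ or not, plus the normal-form trick of pushing first occurrences to the left via $xzxyty\approx xzyxty$. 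One then observes that $\mathbf X$ can be axiomatised by a single identity $xt_1x\cdots t_kx\approx\mathbf w$ together with identities of that restricted type, and that this single identity also transfers to $\mathbf Y$ by comparing with either $\mathbf V$ or $\mathbf Y\wedge\mathbf V$. This uniform argument handles all the $\mathbf V$ on the list at once, including the two limits, with no induction and no separate limit step --- so what you anticipate as the hardest part simply does not arise.
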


In general, the set of modular elements in a lattice need not form a sublattice.
For example, the elements $x$ and $y$ of the lattice in Fig.~\ref{join is not modular} are modular but their join $x\vee y$ is not.
%In particular, the join of two semigroup varieties that are modular elements of $\Sem$ may not be a modular element of $\Sem$ (see~\cite[??]{Vernikov-15}).
However, Theorem~\ref{T: modular} shows that the set of all modular elements of the lattice $\Mon$ forms a sublattice and, moreover, all proper monoid varieties that are modular elements in $\Mon$ constitute an order ideal in $\Mon$.

\begin{figure}[htb]
\begin{center}
\unitlength=1mm
\linethickness{0.4pt}
\begin{picture}(60,31)

\put(30,0){\circle*{1.33}}

\put(10,10){\circle*{1.33}}
\put(50,10){\circle*{1.33}}

\put(30,10){\circle*{1.33}}
\put(30,20){\circle*{1.33}}

%\put(20,15){\circle*{1.33}}
\put(40,20){\circle*{1.33}}

\put(10,20){\circle*{1.33}}
\put(50,20){\circle*{1.33}}
\put(30,30){\circle*{1.33}}
\gasset{AHnb=0,linewidth=0.4}
\drawline(30,0)(10,10)(10,20)(30,10)(50,20)(50,10)(30,0)
\drawline(10,20)(30,30)(50,20)
\drawline(30,0)(30,10)
\drawline(30,10)(40,20)(30,30)(30,20)(10,10)
\drawline(30,20)(50,10)
\put(9,7.5){\makebox(0,0){$x$}}
\put(51,7.5){\makebox(0,0){$y$}}
\end{picture}
\caption{}
\label{join is not modular}
\end{center}
\end{figure}

The article consists of three sections. 
Some definitions, notation and auxiliary results are given in Section~\ref{Sec: preliminaries}, while Section~\ref{Sec: proof} is devoted to the proof of Theorem~\ref{T: modular}.

\section{Preliminaries}
\label{Sec: preliminaries}

An identity $\mathbf u \approx \mathbf v$ is \textit{directly deducible} from an identity $\mathbf s \approx \mathbf t$ if there exist some words $\mathbf a,\mathbf b \in \mathcal X^\ast$ and substitution $\varphi\colon \mathcal X \to \mathcal X^\ast$ such that $\{ \mathbf u, \mathbf v \} = \{ \mathbf a\varphi(\mathbf s)\mathbf b,\mathbf a\varphi(\mathbf t)\mathbf b \}$.
A non-trivial identity $\mathbf u \approx \mathbf v$ is \textit{deducible} from a set $\Sigma$ of identities if there exists some finite sequence $\mathbf u = \mathbf w_0, \dots, \mathbf w_m = \mathbf v$ of words such that each identity $\mathbf w_i \approx \mathbf w_{i+1}$ is directly deducible from some identity in $\Sigma$.

\begin{proposition}[Birkhoff's Completeness Theorem for Equational Logic; see {\cite[Theorem~II.14.19]{Burris-Sankappanavar-81}}]
\label{P: deduction}
A monoid variety  defined by a set $\Sigma$ of identities satisfies an identity $\mathbf u \approx \mathbf v$ if and only if $\mathbf u \approx \mathbf v$ is deducible from $\Sigma$.\qed
\end{proposition}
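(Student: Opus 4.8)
The plan is to prove the two implications separately: the direction "deducible $\Rightarrow$ satisfied" (soundness), which is routine, and "satisfied $\Rightarrow$ deducible" (completeness), which rests on constructing the relatively free monoid of $\mathbf V$ over the alphabet $\mathcal X$. Throughout I would adopt the convention that any map $\varphi\colon\mathcal X\to N$ into a monoid $N$ is identified with its unique extension to a homomorphism $\mathcal X^\ast\to N$; then substitutions act on words and composites of substitutions are again substitutions.

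For soundness it suffices to verify that a single direct deduction step preserves satisfaction, since the relation ``$\mathbf V$ satisfies $\mathbf p\approx\mathbf q$'' is reflexive, symmetric, and transitive, and a deduction is a finite chain of such steps. So suppose $\mathbf V$ satisfies $\mathbf s\approx\mathbf t$ and that $\mathbf u\approx\mathbf v$ is directly deducible from it, say $\mathbf u=\mathbf a\varphi(\mathbf s)\mathbf b$ and $\mathbf v=\mathbf a\varphi(\mathbf t)\mathbf b$. Fixing $M\in\mathbf V$ and a substitution $\psi\colon\mathcal X\to M$, the composite $\psi\circ\varphi$ is again a substitution into $M$, so satisfaction of $\mathbf s\approx\mathbf t$ gives $\psi(\varphi(\mathbf s))=\psi(\varphi(\mathbf t))$; multiplying on the left by $\psi(\mathbf a)$ and on the right by $\psi(\mathbf b)$ yields $\psi(\mathbf u)=\psi(\mathbf v)$. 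As $M$ and $\psi$ are arbitrary, $\mathbf V$ satisfies $\mathbf u\approx\mathbf v$.

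For completeness I would define a relation $\sim$ on $\mathcal X^\ast$ by declaring $\mathbf p\sim\mathbf q$ whenever $\mathbf p=\mathbf q$ or $\mathbf p\approx\mathbf q$ is deducible from $\Sigma$. The symmetry of the defining condition for direct deducibility, together with the fact that deductions may be reversed and concatenated, shows that $\sim$ is an equivalence relation. The crucial point is that $\sim$ is a \emph{congruence} of $\mathcal X^\ast$: if $\mathbf p\approx\mathbf p'$ is directly deducible from some $\mathbf s\approx\mathbf t\in\Sigma$ via multipliers $\mathbf a,\mathbf b$ and substitution $\varphi$, then for any $\mathbf q$ the identities $\mathbf q\mathbf p\approx\mathbf q\mathbf p'$ and $\mathbf p\mathbf q\approx\mathbf p'\mathbf q$ are directly deducible from the same identity using the multipliers $\mathbf q\mathbf a,\mathbf b$ and $\mathbf a,\mathbf b\mathbf q$; passing to chains then gives that $\mathbf p\sim\mathbf p'$ and $\mathbf q\sim\mathbf q'$ imply $\mathbf p\mathbf q\sim\mathbf p'\mathbf q'$. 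Hence $F:=\mathcal X^\ast/{\sim}$ is a monoid, and I claim $F\in\mathbf V$, i.e.\ $F$ satisfies every $\mathbf s\approx\mathbf t\in\Sigma$. Indeed, given $\chi\colon\mathcal X\to F$, I would choose representatives so that $\chi=\pi\circ\varphi$ for the canonical projection $\pi\colon\mathcal X^\ast\to F$ and some $\varphi\colon\mathcal X\to\mathcal X^\ast$; since $\varphi(\mathbf s)\approx\varphi(\mathbf t)$ is directly deducible from $\mathbf s\approx\mathbf t$ (take $\mathbf a=\mathbf b=1$), we get $\varphi(\mathbf s)\sim\varphi(\mathbf t)$, whence $\chi(\mathbf s)=\pi(\varphi(\mathbf s))=\pi(\varphi(\mathbf t))=\chi(\mathbf t)$.

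To finish, suppose $\mathbf V$ satisfies a non-trivial identity $\mathbf u\approx\mathbf v$. Applying this to $F\in\mathbf V$ and the substitution $\pi|_{\mathcal X}\colon\mathcal X\to F$, whose extension to $\mathcal X^\ast$ is exactly $\pi$, yields $\pi(\mathbf u)=\pi(\mathbf v)$, that is $\mathbf u\sim\mathbf v$; since $\mathbf u\ne\mathbf v$, this is precisely the assertion that $\mathbf u\approx\mathbf v$ is deducible from $\Sigma$ (the case $\mathbf u=\mathbf v$ needs no argument). The main obstacle is the pair of claims that $\sim$ is a congruence and that $F$ lies in $\mathbf V$: these are the two points where the monoid structure genuinely enters, and the definition of direct deducibility, with its two-sided multipliers $\mathbf a,\mathbf b$ and its substitution $\varphi$, has been arranged exactly so that the fully invariant congruence on $\mathcal X^\ast$ generated by $\Sigma$ coincides step-for-step with deducibility. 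Conceptually the argument amounts to recognizing $F$ as the $\mathbf V$-free monoid on $\mathcal X$ whose defining congruence is deducibility; the only real care required is bookkeeping of how substitutions compose with the projection $\pi$.
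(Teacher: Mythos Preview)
The paper does not give its own proof of this proposition: the statement is quoted with a citation to \cite[Theorem~II.14.19]{Burris-Sankappanavar-81} and closed immediately with \qed. Your argument is correct and is essentially the standard textbook proof (construct the $\Sigma$-relatively-free monoid $F=\mathcal X^\ast/{\sim}$, check that deducibility is a fully invariant congruence so that $F\in\mathbf V$, then read off completeness from $\pi(\mathbf u)=\pi(\mathbf v)$); there is nothing to compare beyond noting that you have supplied a self-contained justification where the paper simply invokes the reference.
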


Given a variety $\mathbf V$, a word $\mathbf u$ is called an \emph{isoterm for} $\mathbf V$ if the only word $\mathbf v$ such that $\mathbf V$ satisfies the identity $\mathbf u \approx \mathbf v$ is the word $\mathbf u$ itself.

\begin{lemma}[\mdseries{\!\!\cite[Lemma~3.3]{Jackson-05}}]
\label{L: M(W) in V}
Let $\mathbf V$ be a monoid variety and $\mathcal W$ a set of words. 
Then $M(\mathcal W)\in\mathbf V$ if and only if each word in $\mathcal W$ is an isoterm for $\mathbf V$.\qed
\end{lemma}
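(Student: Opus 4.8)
The plan is to exploit the explicit structure of $M(\mathcal W)$: its nonzero elements are precisely the subwords (including the empty word) of words in $\mathcal W$, each represented by the word itself, together with at most one further element, a zero $0$ that absorbs every product whose underlying word fails to be a subword of any word in $\mathcal W$. Write $\pi\colon\mathcal X^\ast\to M(\mathcal W)$ for the natural projection; it is a surjective monoid homomorphism fixing every word that is a subword of some word in $\mathcal W$ and sending every other word to $0$. Both directions will be obtained by comparing substitutions into $M(\mathcal W)$ with their lifts to the free monoid via $\pi$.

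For the implication that $M(\mathcal W)\in\mathbf V$ forces each $\mathbf w\in\mathcal W$ to be an isoterm, I would use the substitution $\psi\colon\mathcal X\to M(\mathcal W)$ given by $\psi(x)=\pi(x)$ and apply it to any identity $\mathbf w\approx\mathbf v$ holding in $\mathbf V$. Since $\mathbf w$ is a subword of itself, $\psi(\mathbf w)=\pi(\mathbf w)=\mathbf w\ne 0$; as $M(\mathcal W)$ lies in $\mathbf V$ it satisfies $\mathbf w\approx\mathbf v$, whence $\pi(\mathbf v)=\psi(\mathbf v)=\mathbf w$. But $\pi(\mathbf v)$ equals either the word $\mathbf v$ (when $\mathbf v$ is a subword of a word in $\mathcal W$) or $0$, so it can equal the nonzero element $\mathbf w$ only if $\mathbf v=\mathbf w$. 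Hence $\mathbf w$ is an isoterm for $\mathbf V$.

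The converse is the substantive direction. Assuming every word of $\mathcal W$ is an isoterm, I would verify, via Proposition~\ref{P: deduction}, that $M(\mathcal W)$ satisfies each identity $\mathbf u\approx\mathbf v$ of $\mathbf V$, i.e.\ that $\psi(\mathbf u)=\psi(\mathbf v)$ for every substitution $\psi\colon\mathcal X\to M(\mathcal W)$. The key device is to lift $\psi$ to an endomorphism $\widehat\psi$ of $\mathcal X^\ast$ with $\pi\circ\widehat\psi=\psi$: send a letter $x$ to the word representing $\psi(x)$ when $\psi(x)\ne 0$, and to a fixed word lying in the ideal (which is available precisely when some letter is sent to $0$, since then $M(\mathcal W)$ has a zero) when $\psi(x)=0$. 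If $\psi(\mathbf u)=\psi(\mathbf v)=0$ there is nothing to prove, so by symmetry assume $\psi(\mathbf u)\ne 0$. Then $\widehat\psi(\mathbf u)$ is a subword of some $\mathbf w\in\mathcal W$, say $\mathbf w=\mathbf a\,\widehat\psi(\mathbf u)\,\mathbf b$. Because $\mathbf V$ satisfies $\mathbf u\approx\mathbf v$ and is closed under direct deducibility, it also satisfies $\mathbf a\,\widehat\psi(\mathbf u)\,\mathbf b\approx\mathbf a\,\widehat\psi(\mathbf v)\,\mathbf b$, that is, $\mathbf w\approx\mathbf a\,\widehat\psi(\mathbf v)\,\mathbf b$. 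Since $\mathbf w$ is an isoterm, this forces $\mathbf a\,\widehat\psi(\mathbf v)\,\mathbf b=\mathbf w=\mathbf a\,\widehat\psi(\mathbf u)\,\mathbf b$, so $\widehat\psi(\mathbf v)=\widehat\psi(\mathbf u)$ by cancellation in $\mathcal X^\ast$; applying $\pi$ gives $\psi(\mathbf v)=\psi(\mathbf u)$.

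The step I expect to be the main obstacle is the correct handling of the zero of $M(\mathcal W)$ in the lift $\widehat\psi$. One must choose the images of the zero-letters so that $\pi\circ\widehat\psi=\psi$ holds exactly, confirm that suitable words are available precisely in the situation where they are needed, and check that the case split on whether $\psi(\mathbf u)$ or $\psi(\mathbf v)$ vanishes is exhaustive so that the symmetric argument indeed covers every substitution. Once the lift is in place, the whole proof reduces uniformly to the isoterm hypothesis combined with the fully invariant (direct-deducibility) closure of $\mathbf V$ recorded before Proposition~\ref{P: deduction}.
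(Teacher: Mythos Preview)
The paper does not supply its own proof of this lemma: it is quoted verbatim from \cite[Lemma~3.3]{Jackson-05} and closed with a \qed, so there is nothing in the paper to compare against. Your argument is correct and is essentially the standard proof one finds in the literature; the only quibble is that the appeal to Proposition~\ref{P: deduction} is unnecessary, since what you actually use is the elementary fact that a monoid lies in $\mathbf V$ iff it satisfies every identity of $\mathbf V$, together with the closure of the identities of $\mathbf V$ under direct deducibility.
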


The \textit{alphabet} of a word $\mathbf w$, i.e., the set of all letters occurring in $\mathbf w$, is denoted by $\alf(\mathbf w)$. 
For a word $\mathbf w$ and a letter $x$, let $\occ_x(\mathbf w)$ denote the number of occurrences of $x$ in $\mathbf w$.
A letter $x$ is called \textit{simple} [\textit{multiple}] \textit{in a word} $\mathbf w$ if $\occ_x(\mathbf w)=1$ [respectively, $\occ_x(\mathbf w)>1$]. 
The set of all simple [multiple] letters in a word $\mathbf w$ is denoted by $\simple(\mathbf w)$ [respectively, $\mul(\mathbf w)$]. 

\begin{lemma}[\mdseries{\!\!\cite[Lemma~2.17]{Gusev-Vernikov-21}}]
\label{L: id M(xy)}
Let $\mathbf u\approx \mathbf v$ be an identity of $M(xy)$. 
If $\mathbf u=\mathbf u_0t_1\mathbf u_1\cdots t_m\mathbf u_m$, where $\simple(\mathbf u)=\{t_1,\dots,t_m\}$, then $\mathbf v=\mathbf v_0t_1\mathbf v_1\cdots t_m\mathbf v_m$, $\alf(\mathbf u_0\cdots \mathbf u_m)=\alf(\mathbf v_0\cdots \mathbf v_m)$ and $\simple(\mathbf v)=\{t_1,\dots,t_m\}$.\qed
\end{lemma}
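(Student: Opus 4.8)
The plan is to read off all three conclusions from the identity $\mathbf u\approx\mathbf v$ by evaluating it under a few carefully chosen substitutions into the five‑element monoid $M(xy)=\{1,x,y,xy,0\}$. Recall its multiplication: $1$ is the identity, $x\cdot y=xy$, and every other product collapses to the zero $0$ — in particular $x\cdot x=y\cdot y=0$, $y\cdot x=0$, and $(xy)^2=0$. Since $M(xy)$ satisfies $\mathbf u\approx\mathbf v$, for every substitution $\varphi\colon\mathcal X\to M(xy)$ we have $\varphi(\mathbf u)=\varphi(\mathbf v)$, and I will use this for suitable $\varphi$.

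First I would show that occurrences are preserved qualitatively. Fix a letter $t$ and take $\varphi$ with $\varphi(t)=xy$ and $\varphi(s)=1$ for every letter $s\ne t$. The factors $1$ disappear, so $\varphi(\mathbf u)=(xy)^{\occ_t(\mathbf u)}$, which equals $1$, $xy$, or $0$ according as $\occ_t(\mathbf u)$ is $0$, $1$, or at least $2$; these three elements of $M(xy)$ are pairwise distinct. Comparing with $\varphi(\mathbf v)=(xy)^{\occ_t(\mathbf v)}$ gives $\occ_t(\mathbf u)=0\iff\occ_t(\mathbf v)=0$ and $\occ_t(\mathbf u)=1\iff\occ_t(\mathbf v)=1$. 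Consequently $\alf(\mathbf u)=\alf(\mathbf v)$, $\simple(\mathbf u)=\simple(\mathbf v)=\{t_1,\dots,t_m\}$, and $\mul(\mathbf u)=\mul(\mathbf v)$.

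Next I would pin down the order of the simple letters. Take $1\le i<j\le m$, so $t_i$ occurs before $t_j$ in $\mathbf u$, and choose $\varphi$ with $\varphi(t_i)=x$, $\varphi(t_j)=y$, and $\varphi(s)=1$ for all other letters. Since $t_i,t_j$ are simple, $\varphi(\mathbf u)=x\cdot y=xy\ne0$, hence $\varphi(\mathbf v)=xy\ne0$ as well; as $y\cdot x=0$ in $M(xy)$, this forces $t_i$ to occur before $t_j$ in $\mathbf v$. Ranging over all $i<j$, the simple letters $t_1,\dots,t_m$ appear in $\mathbf v$ in exactly this left‑to‑right order, i.e.\ $\mathbf v=\mathbf v_0t_1\mathbf v_1\cdots t_m\mathbf v_m$ with $\simple(\mathbf v)=\{t_1,\dots,t_m\}$ and every letter of each $\mathbf v_i$ multiple in $\mathbf v$. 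Finally, $\mathbf u_0\mathbf u_1\cdots\mathbf u_m$ is obtained from $\mathbf u$ by deleting its simple letters, so $\alf(\mathbf u_0\cdots\mathbf u_m)=\mul(\mathbf u)$, and likewise $\alf(\mathbf v_0\cdots\mathbf v_m)=\mul(\mathbf v)$; these two sets coincide by the previous paragraph, which completes the proof.

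There is no genuinely hard step: the whole content lies in picking the right test substitutions into $M(xy)$, and the only care needed is in checking the (elementary) multiplication table of $M(xy)$ — that $1,x,y,xy,0$ are pairwise distinct and that $xy$, every longer product, every product with a repeated letter, and $yx$ behave as stated — and in reading the decomposition $\mathbf u=\mathbf u_0t_1\mathbf u_1\cdots t_m\mathbf u_m$ with $t_1,\dots,t_m$ listed in order of their (unique) occurrences, so that the asserted form of $\mathbf v$ has the intended meaning.
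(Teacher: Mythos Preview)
Your argument is correct and is the standard elementary proof. Note, however, that the paper does not actually give its own proof of this lemma: it is quoted verbatim from \cite[Lemma~2.17]{Gusev-Vernikov-21} and closed with a \qed, so there is nothing in the present paper to compare your approach against. Your substitution tests into $M(xy)$---first $t\mapsto xy$ to separate absent/simple/multiple letters, then $t_i\mapsto x$, $t_j\mapsto y$ to force the left-to-right order of the simple letters---are exactly the expected verifications, and the final identification $\alf(\mathbf u_0\cdots\mathbf u_m)=\mul(\mathbf u)=\mul(\mathbf v)=\alf(\mathbf v_0\cdots\mathbf v_m)$ is immediate.
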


The following statement was established in the proof of Lemma~3.5 in~\cite{Gusev-Vernikov-21}.

\begin{lemma}
\label{L: swapping in linear-balanced}
Let $\mathbf V$ be a monoid variety such that $M(xt_1x\cdots t_nx)\in\mathbf V$. 
If $M(\mathbf p\,xy\,\mathbf q)\notin\mathbf V$, where $\mathbf p:=a_1t_1\cdots a_kt_k$ and $\mathbf q:=t_{k+1}a_{k+1}\cdots t_{k+\ell}a_{k+\ell}$ for some $k,\ell\ge0$ and $a_1,\dots,a_{k+\ell}$ are letters such that $\{a_1,\dots,a_{k+\ell}\}=\{x,y\}$ and $\occ_x(\mathbf p\mathbf q),\occ_y(\mathbf p\mathbf q)\le n$, then $\mathbf V$ satisfies the identity $\mathbf p\,xy\,\mathbf q\approx\mathbf p\,yx\,\mathbf q$.\qed
\end{lemma}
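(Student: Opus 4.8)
The plan is to prove Lemma~\ref{L: swapping in linear-balanced} by exhibiting an explicit deduction of the identity $\mathbf p\,xy\,\mathbf q\approx\mathbf p\,yx\,\mathbf q$ from the identities that $\mathbf V$ satisfies, invoking Proposition~\ref{P: deduction}. The starting point is the hypothesis $M(\mathbf p\,xy\,\mathbf q)\notin\mathbf V$: by Lemma~\ref{L: M(W) in V} this means the word $\mathbf w:=\mathbf p\,xy\,\mathbf q$ is \emph{not} an isoterm for $\mathbf V$, so there is a non-trivial identity $\mathbf w\approx\mathbf w'$ holding in $\mathbf V$. The first step is to extract strong structural information about $\mathbf w'$. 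Note that in $\mathbf w$ the letters $t_1,\dots,t_{k+\ell}$ are simple while $x,y$ each occur at most $n$ (hence $\ge2$, or we handle the degenerate cases separately) times, and the word has the linear-balanced shape $a_1t_1\cdots a_kt_k\,xy\,t_{k+1}a_{k+1}\cdots t_{k+\ell}a_{k+\ell}$. Since $M(xt_1x\cdots t_nx)\in\mathbf V$, the word $xt_1x\cdots t_nx$ is an isoterm for $\mathbf V$; applying this together with the simple-letter bookkeeping of Lemma~\ref{L: id M(xy)} (noting $M(xy)\in\mathbf V$ because $xy$ is a subword of $xt_1x\cdots t_nx$, which is an isoterm) forces $\mathbf w'$ to have exactly the same skeleton of simple letters in the same order, the same content between consecutive simple letters, and the same number of occurrences of $x$ and $y$. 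One then argues that the only freedom left is the relative order of the single block $xy$ sitting between $t_k$ and $t_{k+1}$, so $\mathbf w'$ must be obtainable from $\mathbf w$ by a permutation of that block, and since $\mathbf w\ne\mathbf w'$ the block must get reversed: $\mathbf w'=\mathbf p\,yx\,\mathbf q$, which is precisely the conclusion.

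More concretely, I would set up the argument as follows. First, dispense with the trivial/degenerate situations: if $\occ_x(\mathbf p\mathbf q)=0$ then $x$ is simple in $\mathbf w$ and Lemma~\ref{L: id M(xy)} already pins its position, contradicting non-triviality unless the swap is forced; similarly for $y$. So assume both $x$ and $y$ are multiple in $\mathbf w$. Next, apply Lemma~\ref{L: id M(xy)} with the tuple of simple letters $t_1<\dots<t_{k+\ell}$ (in the order they appear in $\mathbf w$): it gives $\mathbf w'=\mathbf v_0t_1\mathbf v_1\cdots t_{k+\ell}\mathbf v_{k+\ell}$ with $\mathrm{alph}(\mathbf v_0\cdots\mathbf v_{k+\ell})=\{x,y\}$ and the same simple letters. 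Now look block by block. For $i\le k$ the corresponding block of $\mathbf w$ is the single letter $a_i\in\{x,y\}$; I must show $\mathbf v_{i-1}$ (for $i\ge2$) and the outer pieces are correspondingly forced. The middle block between $t_k$ and $t_{k+1}$ is $xy$ in $\mathbf w$. The key leverage is the isoterm $xt_1x\cdots t_nx$: picking a suitable substitution that collapses all letters except $x$ (or except $y$) to simple separators and counts occurrences, one sees that $\mathbf w'$ restricted to $x$'s alone must match $\mathbf w$ restricted to $x$'s relative to the $t_i$-skeleton, and likewise for $y$; this kills any migration of an $x$ or $y$ across a $t_i$. The only remaining ambiguity is the internal order of the two-letter block $xy$, forcing $\mathbf w'\in\{\mathbf p\,xy\,\mathbf q,\mathbf p\,yx\,\mathbf q\}$, and non-triviality gives $\mathbf w'=\mathbf p\,yx\,\mathbf q$. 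By Proposition~\ref{P: deduction}, $\mathbf V$ satisfies $\mathbf p\,xy\,\mathbf q\approx\mathbf p\,yx\,\mathbf q$.

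The main obstacle I anticipate is the rigidity argument showing that no occurrence of $x$ or $y$ can move past a simple letter $t_i$ or get absorbed into a different block $\mathbf v_j$ — Lemma~\ref{L: id M(xy)} guarantees the simple-letter order and the \emph{total} multiset $\{x,y\}$ of non-simple content, but not the distribution of that content among the blocks $\mathbf v_0,\dots,\mathbf v_{k+\ell}$. Closing this gap is exactly where the hypotheses $M(xt_1x\cdots t_nx)\in\mathbf V$ and $\occ_x(\mathbf p\mathbf q),\occ_y(\mathbf p\mathbf q)\le n$ must be used: the occurrence bound ensures that the relevant sub-pattern of $x$'s (or $y$'s) interleaved with simple letters embeds into $xt_1x\cdots t_nx$ as a subword, so the isoterm property of that word transfers and prohibits any rearrangement of the $x$-positions (respectively $y$-positions) relative to the skeleton. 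Once both one-letter projections are pinned, only the order within the length-two block $xy$ is free, and the conclusion drops out; the rest is the routine bookkeeping that I would not spell out in full. (This is, of course, exactly the argument sketched in the proof of Lemma~3.5 of~\cite{Gusev-Vernikov-21}, which is why the statement is quoted rather than reproved here.)
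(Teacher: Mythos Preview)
The paper does not give its own proof of this lemma: it is stated with a \qed\ and attributed to the proof of Lemma~3.5 in~\cite{Gusev-Vernikov-21}. Your sketch follows precisely the approach one expects from that source --- use Lemma~\ref{L: M(W) in V} to get a non-trivial identity $\mathbf w\approx\mathbf w'$, invoke Lemma~\ref{L: id M(xy)} to fix the simple-letter skeleton, and then use the isoterm $xt_1x\cdots t_nx$ (via the bound $\occ_x(\mathbf p\mathbf q),\occ_y(\mathbf p\mathbf q)\le n$) to pin the block-by-block distribution of each multiple letter, leaving only the internal order of the central block $xy$ free --- and you even note as much in your final parenthetical. So there is nothing to compare: your outline is the intended argument, and the one place you flag as ``routine bookkeeping'' (showing that erasing $y$ and appealing to the isoterm property of $xt_1x\cdots t_nx$ really does force $\mathbf w'_y=\mathbf w_y$) is indeed where the work lies, but it goes through by choosing substitutions that collapse all but one separator between each pair of consecutive $x$'s.
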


If $\mathbf w$ is a word and $\mathcal Z\subseteq\alf(\mathbf w)$, then we denote by $\mathbf w_{\mathcal Z}$ [respectively, $\mathbf w(\mathcal Z)$] the word obtained from $\mathbf w$ by removing all occurrences of letters from $\mathcal Z$ [respectively, $\alf(\mathbf w)\setminus \mathcal Z$]. 
If $\mathcal Z=\{z\}$, then we write $\mathbf w_z$ rather than $\mathbf w_{\{z\}}$.

The expression $_{i\mathbf w}x$ means the $i$th occurrence of a letter $x$ in a word $\mathbf w$. 
If the $i$th occurrence of $x$ precedes the $j$th occurrence of $y$ in a word $\mathbf w$, then we write $({_{i\mathbf w}x}) < ({_{j\mathbf w}y})$.

%We require the following auxiliary result which was established in the proof of Proposition 3.15 in~\cite{Gusev-Vernikov-21}.
%
%\begin{proposition}
%\label{P: subvarieties}
%Let $\mathbf V$ be a monoid variety satisfying $x^2\approx x^3$ and~\eqref{a=a',c=c',d=d'} such that $M(xyx)\in\mathbf V$.
%Then $\mathbf V$ can be defined by the identities in~\eqref{xx=xxx,xxy=yxx} and~\eqref{a=a',c=c',d=d'} together with some of the following identities:
%\begin{equation}
%\label{one letter in a block}
%x\biggl(\prod_{i=1}^nt_ix\biggr)\approx x^2\biggl(\prod_{i=1}^nt_i\biggr),
%\end{equation}
%where $n\in\mathbb N$; and
%\begin{equation}
%\label{two letters in a block}
%\biggl(\prod_{i=1}^k a_it_i\biggr) xy \biggl(\prod_{i=k+1}^{k+\ell} t_i a_i\biggr)\approx\biggl(\prod_{i=1}^k a_it_i\biggr) yx  \biggl(\prod_{i=k+1}^{k+\ell} t_ia_i\biggr),
%\end{equation}
%where $k,\ell\ge0$ and $a_1,\dots,a_{k+\ell}\in\{x,y\}$.\qed
%\end{proposition}

\section{Proof of Theorem~\ref{T: modular}}
\label{Sec: proof}

We will prove Theorem~\ref{T: modular} following the scheme (i) $\Rightarrow$ (ii) $\Rightarrow$ (iii) $\Rightarrow$ (i).

\smallskip

(i) $\Rightarrow$ (ii). 
Let $\mathbf V$ be a proper monoid variety which is a modular element of the lattice $\Mon$.
Then the variety $\mathbf V$ satisfies the identities
\begin{equation}
\label{xx=xxx,xxy=yxx}
x^2\approx x^3,\  x^2y\approx yx^2
\end{equation} 
by Proposition~4.3 in~\cite{Gusev-Lee-21}.

If $M(xyx)\notin \mathbf V$, then it follows from Lemma~3.3(i) in~\cite{Gusev-Vernikov-21} that $\mathbf V$ satisfies the identities $x^2y\approx xyx\approx yx^2$ which, evidently, imply all the identities listed in Item~(ii) of Theorem~\ref{T: modular}.
So, we may further assume that $M(xyx)\in \mathbf V$.

The rest of the proof proceeds in three steps.

\smallskip

\noindent\textbf{Step 1:} $\mathbf V$ satisfies the identities $xyzxty\approx yxzxty$ and $xzytxy\approx xzytyx$.

Arguing by contradiction, suppose that $\mathbf V$ violates the identity $xyzxty\approx yxzxty$.  
Then $M(xyzxty)\in\mathbf V$ by Lemma~\ref{L: swapping in linear-balanced}.
Let $\mathbf X^\prime$ denote the monoid variety defined by the identity 
\[
\mathbf u:=z_1t_1z_2t_2\,c^2z_1bz_2\,xcy\,b\,s_1xs_2y\approx z_1t_1z_2t_2\,c^2z_1bz_2\,ycx\,b\,s_1xs_2y=:\mathbf v.
\]
We need the following auxiliary result.

\begin{claim}
\label{C: X'}
The words $\mathbf u$ and $\mathbf v$ can only form an identity of $\mathbf X^\prime$ with each other.
\end{claim}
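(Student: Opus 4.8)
The plan is to reduce the claim to a question about \emph{deducibility} and then to a finite combinatorial analysis of the words $\mathbf u$ and $\mathbf v$. By Birkhoff's Completeness Theorem (Proposition~\ref{P: deduction}), $\mathbf X^\prime$ satisfies an identity $\mathbf u\approx\mathbf w$ if and only if $\mathbf u\approx\mathbf w$ is deducible from the single identity $\mathbf u\approx\mathbf v$; that is, there is a sequence $\mathbf u=\mathbf w_0,\mathbf w_1,\dots,\mathbf w_k=\mathbf w$ of words such that each $\mathbf w_i\approx\mathbf w_{i+1}$ is directly deducible from $\mathbf u\approx\mathbf v$. Consequently, by induction on $k$, it will suffice to prove the following local claim: \emph{if a word $\mathbf w$ is directly deducible from $\mathbf u$ or from $\mathbf v$ via $\mathbf u\approx\mathbf v$, then $\mathbf w\in\{\mathbf u,\mathbf v\}$}. (A convenient preliminary remark is that $\mathbf u$ and $\mathbf v$ have the same content, so $\mathbf u\approx\mathbf v$, and hence every identity of $\mathbf X^\prime$, is balanced; in particular $\mathbf w$ will automatically have the same content as $\mathbf u$.)

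To prove the local claim I would fix $\mathbf a,\mathbf b\in\mathcal X^\ast$ and $\varphi\colon\mathcal X\to\mathcal X^\ast$ with $\{\mathbf u,\mathbf w\}=\{\mathbf a\,\varphi(\mathbf u)\,\mathbf b,\,\mathbf a\,\varphi(\mathbf v)\,\mathbf b\}$; the case where $\mathbf w$ is deducible from $\mathbf v$ is symmetric. Writing $\mathbf u=\mathbf p\cdot xcy\cdot\mathbf q$ and $\mathbf v=\mathbf p\cdot ycx\cdot\mathbf q$ with $\mathbf p:=z_1t_1z_2t_2c^2z_1bz_2$ and $\mathbf q:=bs_1xs_2y$, one has $\varphi(\mathbf u)=\varphi(\mathbf p)\,\varphi(x)\varphi(c)\varphi(y)\,\varphi(\mathbf q)$ and $\varphi(\mathbf v)=\varphi(\mathbf p)\,\varphi(y)\varphi(c)\varphi(x)\,\varphi(\mathbf q)$. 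If $\varphi(x)\varphi(c)\varphi(y)=\varphi(y)\varphi(c)\varphi(x)$, then $\varphi(\mathbf u)=\varphi(\mathbf v)$ and so $\mathbf w=\mathbf u$, as required; hence I may assume $\varphi(x)\varphi(c)\varphi(y)\ne\varphi(y)\varphi(c)\varphi(x)$, which in particular forces $\varphi(x)\ne\varphi(y)$. The task then becomes the \emph{localization}: to show that, under this assumption, any factorization $\mathbf u=\mathbf a\,\varphi(\mathbf u)\,\mathbf b$ forces $\mathbf a=\mathbf b=1$ and $\varphi(z)=z$ for every $z\in\alf(\mathbf u)$, and that a factorization $\mathbf u=\mathbf a\,\varphi(\mathbf v)\,\mathbf b$ is impossible (since, by the same reasoning applied to $\mathbf v$, it would force $\mathbf u=\varphi(\mathbf v)=\mathbf v$, a contradiction). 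Granting the localization, $\varphi(\mathbf u)=\mathbf u$ and $\varphi(\mathbf v)=\mathbf v$, hence $\mathbf w=\mathbf v\in\{\mathbf u,\mathbf v\}$.

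For the localization I would exploit three structural features of the words $\mathbf u$ and $\mathbf v$. First, the only factor of $\mathbf u$ (and of $\mathbf v$) of the form $\mathbf r\mathbf r$ with $\mathbf r$ a nonempty word is $c^2$; since $\varphi(c)^2=\varphi(c^2)$ occurs inside $\varphi(\mathbf u)$, hence inside $\mathbf u$, this forces $\varphi(c)\in\{1,c\}$, and when $\varphi(c)=c$ it pins this occurrence of $\varphi(c^2)$ to the unique $c^2$ in $\mathbf u$. Second, the simple letters of $\mathbf u$ (and of $\mathbf v$) are exactly $t_1,t_2,s_1,s_2$, occurring in this left-to-right order; examining which of $\varphi(t_1),\varphi(t_2),\varphi(s_1),\varphi(s_2)$ can be nonempty, where the letters occurring in them can fall among the simple letters of $\mathbf u$, and combining this with the first feature, one forces each of $t_1,t_2,s_1,s_2$ to be sent by $\varphi$ either to $1$ or to itself, and one thereby locates the images of the prefix $\mathbf p$ and of the suffix $\mathbf q$. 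Third, a count of occurrences---$\occ_z(\mathbf a)+\occ_z(\varphi(\mathbf u))+\occ_z(\mathbf b)=\occ_z(\mathbf u)$ for every letter $z$---becomes tight once $\varphi$ is known to act without erasure on the letters it actually uses; this yields $\mathbf a=\mathbf b=1$, and then reading the equality $\varphi(\mathbf u)=\mathbf u$ letter by letter gives $\varphi(z)=z$ for all $z\in\alf(\mathbf u)$. The same bookkeeping applies verbatim with $\mathbf v$ in place of $\mathbf u$.

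The step I expect to be the main obstacle is excluding the degenerate substitutions: a priori $\varphi$ might erase several letters of $\alf(\mathbf u)$ while mapping others to longer words, so that $\varphi(\mathbf u)$ could conceivably sit inside $\mathbf u$ as a proper factor in an unforeseen position. Ruling this out is precisely what the (somewhat case-heavy) analysis around the first two features is for---tracking how the unique square $c^2$ and the four simple letters pin the alignment of $\varphi(\mathbf u)$ within $\mathbf u$; once it is established that the only alignment compatible with $\varphi(x)\ne\varphi(y)$ is the trivial one, the remainder of the argument is a routine count of occurrences.
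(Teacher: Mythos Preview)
Your reduction via Proposition~\ref{P: deduction} to a single step of direct deducibility is exactly how the paper begins, and the overall shape of the argument is the same.  The substantive difference is in the combinatorial handle.  The paper's key observation is that \emph{every factor of $\mathbf w\in\{\mathbf u,\mathbf v\}$ of length $>1$ occurs in $\mathbf w$ exactly once}; since each multiple letter $v$ of $\mathbf u$ occurs at least twice, $\varphi(v)$ is forced to have length $\le1$.  Together with the fact that $c$ is the \emph{unique} letter of $\mathbf w$ occurring three times, this immediately gives $\varphi(c)\in\{1,c\}$, and then a short two-case analysis pins down $\varphi(b),\varphi(x),\varphi(y)$.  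Your route via ``the only square factor is $c^2$'' and the simple letters $t_1,t_2,s_1,s_2$ can be made to work, but it is heavier and you have not carried it out; the observation about non-repeating factors does all of that bookkeeping in one stroke.

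Two points where your outline overshoots.  First, the ``localization'' you aim for---that $\mathbf a=\mathbf b=1$ and $\varphi$ is the identity on all of $\alf(\mathbf u)$---is stronger than what is needed and stronger than what the paper proves: since $\mathbf u$ and $\mathbf v$ differ only in the block $xcy$ versus $ycx$, it suffices to determine $\varphi(x),\varphi(c),\varphi(y)$ (and to rule out the orientation $\mathbf w=\mathbf v$) in order to conclude $\mathbf w'\in\{\mathbf u,\mathbf v\}$.  Second, your ``third feature'' (the occurrence count yielding $\mathbf a=\mathbf b=1$) presupposes that $\varphi$ does not erase any letter it uses, which is precisely the degenerate case you flag as the main obstacle; the paper never needs to establish $\mathbf a=\mathbf b=1$ at all.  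So your plan is correct in spirit but would benefit from replacing the square/simple-letter analysis by the single ``no repeated factor of length $>1$'' observation, after which the case split on $\varphi(c)$ is short.
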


\begin{proof}
Take $\mathbf w\in\{\mathbf u,\mathbf v\}$ and consider an arbitrary identity of the form $\mathbf w\approx \mathbf w^\prime$ that holds in $\mathbf X^\prime$.
We are going to verify that $\mathbf w^\prime \in\{\mathbf u,\mathbf v\}$.
By Proposition~\ref{P: deduction} and evident induction, we may assume without any loss that the identity $\mathbf w\approx \mathbf w^\prime$ is directly deducible from the identity $\mathbf u\approx \mathbf v$, i.e., there exist words $\mathbf a,\mathbf b \in \mathcal X^\ast$ and a substitution $\varphi\colon \mathcal X \to \mathcal X^\ast$ such that $(\mathbf w, \mathbf w^\prime ) = ( \mathbf a\varphi(\mathbf u)\mathbf b,\mathbf a\varphi(\mathbf v)\mathbf b )$.
Notice that every subword of $\mathbf w$ of length $>1$ occurs in $\mathbf w$ exactly once and each letter occurs in $\mathbf w$ at most thrice.
It follows that
\begin{itemize}
\item[\textup{($\ast$)}] $\varphi(v)$ is either the empty word or a letter for any $v\in\mul(\mathbf u)=\mul(\mathbf v)$.
\end{itemize}

Suppose first that $\varphi(c)=1$.
In this case, if $\varphi(x)=1$ or $\varphi(y)=1$, then $\varphi(\mathbf u)=\varphi(\mathbf v)$ and so $\mathbf w^\prime\in\{\mathbf u,\mathbf v\}$, as required.
So, we may assume that $\varphi(x)\ne1$ and $\varphi(y)\ne1$.
Then $\varphi(x)$ and $\varphi(y)$ are letters by~($\ast$). 
These letters must be distinct since no letter occurs in $\mathbf w$ more than thrice.
However, the word $\mathbf w$ does not contain any subword consisting exclusively of non-last occurrences of two distinct letters in $\mathbf w$, while the first occurrences of the letters $\varphi(x)$ and $\varphi(y)$ are adjacent in $\mathbf w$, a contradiction.

Suppose now that $\varphi(c)\ne1$.
Then $\varphi(c)=c$ because $c$ is the only letter which occurs thrice in both $\mathbf u$ and $\mathbf w$.
Hence
\[
\varphi(z_1bz_2x)=
\begin{cases}
z_1bz_2x&\text{if } \mathbf w=\mathbf u,\\
z_1bz_2y&\text{if } \mathbf w=\mathbf v.
\end{cases}
\]
Further, it follows from~($\ast)$ that $\varphi(b)=b$, whence 
\[
(\varphi(x),\varphi(y))=
\begin{cases}
(x,y)&\text{if } \mathbf w=\mathbf u,\\
(y,x)&\text{if } \mathbf w=\mathbf v.
\end{cases}
\]
Since $({_{1\mathbf u}}x)<({_{1\mathbf u}}y)<({_{2\mathbf u}}x)<({_{2\mathbf u}}y)$ and $({_{1\mathbf v}}y)<({_{1\mathbf v}}x)<({_{2\mathbf v}}x)<({_{2\mathbf v}}y)$, it follows that the case when $\mathbf w=\mathbf v$ is impossible.
Therefore, $(\varphi(x),\varphi(y))=(x,y)$ in either case. 
This implies that $\mathbf w^\prime =\mathbf v$, and we are done.
\end{proof}

Put $\mathbf X:=\mathbf M(\mathbf u,\mathbf v)\wedge\mathbf X^\prime$.
Consider an arbitrary identity of the form $\mathbf u\approx \mathbf w$ that is satisfied by the variety $\mathbf X\vee \mathbf V$.
In view of Claim~\ref{C: X'}, $\mathbf w\in \{\mathbf u,\mathbf v\}$.
Since $M(xyzxty)\in\mathbf V$, Lemma~\ref{L: M(W) in V} implies that $\mathbf w(x,y,s_1,s_2)=\mathbf u(x,y,s_1,s_2)=xys_1xs_2y$.
Hence $\mathbf w = \mathbf u$.
We see that $\mathbf u$ is an isoterm for the variety $\mathbf X\vee \mathbf V$.
By a similar argument, we can show that $\mathbf v$ is an isoterm for $\mathbf X\vee \mathbf V$ as well.
Then 
\[
\left(\mathbf X\vee \mathbf V\right)\wedge \mathbf M(\mathbf u,\mathbf v) = \mathbf M(\mathbf u,\mathbf v)
\]
by Lemma~\ref{L: M(W) in V}.
It is easy to see that the identity $xyzxty\approx yxzxty$ holds in $M(\mathbf u,\mathbf v)$.
Then $\mathbf V\wedge\mathbf M(\mathbf u,\mathbf v)$ satisfies $\mathbf u\stackrel{\mathbf V}\approx \mathbf u_cc^2\stackrel{\mathbf M(\mathbf u,\mathbf v)}\approx \mathbf v_cc^2\stackrel{\mathbf V}\approx \mathbf v$.
By the very definition, the identity $\mathbf u\approx\mathbf v$ is satisfied by $\mathbf X$ as well.
Since $\mathbf X\subset\mathbf M(\mathbf u,\mathbf v)$, we have
\[
\mathbf X\vee\left(\mathbf V\wedge\mathbf M(\mathbf u,\mathbf v)\right)\subset \left(\mathbf X\vee \mathbf V\right)\wedge \mathbf M(\mathbf u,\mathbf v) = \mathbf M(\mathbf u,\mathbf v),
\]
contradicting the fact that $\mathbf V$ is a modular element of $\Mon$.
Therefore, $\mathbf V$ satisfies $xyzxty\approx yxzxty$.
By the dual argument, we can show that $xzytxy\approx xzytyx$ holds in $\mathbf V$.

\smallskip

\noindent\textbf{Step 2:} $\mathbf V$ satisfies the identity $\mathbf a_{n,m}[\rho]\approx \mathbf a_{n,m}^\prime[\rho]$ for all $n,m\in \mathbb N$ and $\rho\in S_{n+m}$.

Arguing by contradiction, suppose that $\mathbf V$ violates $\mathbf a_{p,q}[\pi]\approx \mathbf a_{p,q}^\prime[\pi]$ for some $p,q\in \mathbb N$ and $\pi\in S_{p+q}$.
Then $M(\mathbf a_{n,m}[\rho])\in\mathbf V$ for some $n,m\in \mathbb N$ and $\rho\in S_{n+m}$ by Lemma 4.8 in~\cite{Gusev-23}.
For brevity, put
\begin{align*}
&\mathbf p:=
\begin{cases}
z_1t_1\cdots z_nt_n,&\text{if }1\le 1\rho\le n,\\
y_1s_1y_2s_2\,z_1t_1\cdots z_nt_n,&\text{if }n< 1\rho\le n+m,
\end{cases}
\\
&\mathbf r:=
\begin{cases}
z_{n+1}t_{n+1}\cdots z_{n+m}t_{n+m}\,s_1y_1s_2y_2,&\text{if }1\le 1\rho\le n,\\
z_{n+1}t_{n+1}\cdots z_{n+m}t_{n+m},&\text{if }n< 1\rho\le n+m.
\end{cases}
\end{align*}
Let
\[
\mathbf X:=
\begin{cases}
\mathbf M(y^2xtx,xy^2tx)&\text{if }1\le 1\rho\le n,\\
\mathbf M(xtxy^2,xty^2x)&\text{if }n< 1\rho\le n+m.
\end{cases}
\]
Consider an arbitrary identity of the form 
\[
\mathbf u:=\mathbf px\biggl(\prod_{i=1}^{n+m-1} z_{i\rho}\biggr)y_1z^2y_2z_{(n+m)\rho}x\mathbf r\approx \mathbf v
\] 
that is satisfied by the variety $\mathbf X\vee \mathbf V$.
In view of Lemma~\ref{L: M(W) in V}, 
\begin{align*}
&\mathbf v_{\{y_1,y_2,s_1,s_2,z\}}=\mathbf u_{\{y_1,y_2,s_1,s_2,z\}}=\mathbf a_{n,m}[\rho],\\
&\mathbf v_{\{x,z\}}=\mathbf u_{\{x,z\}}=\mathbf p\biggl(\prod_{i=1}^{n+m-1} z_{i\rho}\biggr)y_1y_2z_{(n+m)\rho}\mathbf r,\\
&\mathbf v(y_1,s_1,z)=\mathbf u(y_1,s_1,z)=
\begin{cases}
y_1z^2s_1y_1&\text{if }1\le 1\rho\le n,\\
y_1s_1y_1z^2&\text{if }n< 1\rho\le n+m,
\end{cases}
\\
&\mathbf v(y_2,s_2,z)=\mathbf u(y_2,s_2,z)=
\begin{cases}
z^2y_2s_2y_2&\text{if }1\le 1\rho\le n,\\
y_2s_2z^2y_2&\text{if }n< 1\rho\le n+m.
\end{cases}
\end{align*}
It follows that $\mathbf v = \mathbf u$.
We see that the word $\mathbf u$ is an isoterm for the variety $\mathbf X\vee \mathbf V$.
Then 
\[
\left(\mathbf X\vee \mathbf V\right)\wedge \mathbf M(\mathbf u) = \mathbf M(\mathbf u)
\]
by Lemma~\ref{L: M(W) in V}.

Now we are going to verify that $M(\mathbf u)$ satisfies the identity
\begin{align*}
\mathbf a:=\mathbf u_z=\mathbf px\biggl(\prod_{i=1}^{n+m-1} z_{i\rho}\biggr)y_1y_2z_{(n+m)\rho}x\mathbf r
\approx\mathbf pz_{1\rho}x\biggl(\prod_{i=2}^{n+m-1} z_{i\rho}\biggr)y_1y_2z_{(n+m)\rho}x\mathbf r=:\mathbf a^\prime.
\end{align*}
Consider an arbitrary substitution $\psi\colon \mathscr X \to M(\mathbf u)$ and show that $\psi(\mathbf a)=\psi(\mathbf a^\prime)$.
We may suppose that at least one of the elements $\psi(\mathbf a)$ or $\psi(\mathbf a^\prime)$ is non-zero and forms a subword of $\mathbf u$. 
If $\psi(x)=1$, then $\psi(\mathbf a)=\psi(\mathbf a^\prime)$, and we are done.
So, we may further assume that $\psi(x)\ne1$.
Clearly, none of the letters $s_1,s_2,t_1,\dots,t_{n+m}$ belongs to $\alf(\psi(x))$ because all these letters are simple in $\mathbf u$, while $x\in\mul(\mathbf a)=\mul(\mathbf a^\prime)$.
Further, none of the letters $y_1,y_2,z_1,\dots,z_{n+m}$ belongs to $\alf(\psi(x))$ because there is a simple letter between the first and the second occurrences of any of these letters in $\mathbf u$, while there are no simple letters between the first and the second occurrences of $x$ in both $\mathbf a$ and $\mathbf a^\prime$.
Finally, $x\notin \alf(\psi(x))$ because 
\[
\psi\left(\left(\prod_{i=1}^{n+m-1} z_{i\rho}\biggr)y_1y_2z_{(n+m)\rho}\right)\right)
\] 
cannot contain $z^2$ as a subword.
Therefore, $\psi(x)=z$.
Then $\psi(y_1)=\psi(y_2)=\psi(z_{2\rho})=\cdots=\psi(z_{(n+m)\rho})=1$.
Assume that $\psi(z_{1\rho})\ne1$.
Then $\psi(\mathbf a^\prime)$ is a subword of $\mathbf u$, and if $1\le 1\rho\le n$ [respectively, $n< 1\rho\le n+m$] the image of the second [respectively, first] occurrence of $z_{1\rho}$ in $\mathbf a^\prime$ under $\psi$ must contain the first [respectively, second] occurrence of $y_1$ in $\mathbf u$, a contradiction.
Therefore, $\psi(z_{1\rho})=1$, whence $\psi(\mathbf a)=\psi(\mathbf a^\prime)$.
Thus, we have proved that the identity $\mathbf a\approx \mathbf a^\prime$ holds in $M(\mathbf u)$.

Then $\mathbf V\wedge\mathbf M(\mathbf u)$ satisfies
\[
\mathbf u\stackrel{\mathbf V}\approx \mathbf u_zz^2=\mathbf az^2\stackrel{\mathbf M(\mathbf u)}\approx  \mathbf a^\prime z^2\stackrel{\mathbf V}\approx \mathbf pz_{1\rho}x\biggl(\prod_{i=2}^{n+m-1} z_{i\rho}\biggr)y_1z^2y_2z_{(n+m)\rho}x\mathbf r=:\mathbf u^\prime.
\]
It follows from the very definition of $\mathbf X$ that $\mathbf X\subset\mathbf M(\mathbf u)$ and the identity $\mathbf u\approx \mathbf u^\prime$ is satisfied by $\mathbf X$.
Then
\[
\mathbf X\vee\left(\mathbf V\wedge\mathbf M(\mathbf u)\right)\subset \left(\mathbf X\vee \mathbf V\right)\wedge \mathbf M(\mathbf u) = \mathbf M(\mathbf u),
\]
contradicting the fact that $\mathbf V$ is a modular element of $\Mon$.
Therefore, $\mathbf V$ satisfies $\mathbf a_{n,m}[\rho]\approx \mathbf a_{n,m}^\prime[\rho]$ for all $n,m\in \mathbb N$ and $\rho\in S_{n+m}$.

\smallskip

\noindent\textbf{Step 3:} $\mathbf V$ satisfies the identities $xzxtxysy\approx xzxtyxsy$ and $xzxytysy\approx xzyxtysy$.

If $M(xyxzx)\notin \mathbf V$, then, by Lemma~\ref{L: M(W) in V}, the variety $\mathbf V$ satisfies an identity $xyxzx\approx x^pyx^qzx^r$ with either $p>1$ or $q>1$ or $r>1$.
In any case, this identity together with the identities~\eqref{xx=xxx,xxy=yxx} imply the identity $xyxzx\approx x^2yz$ and so the identities $xzxtxysy\approx xzxtyxsy$ and $xzxytysy\approx xzyxtysy$.
So, we may further assume that $M(xyxzx)\in \mathbf V$.

Arguing by contradiction, suppose that $\mathbf V$ violates the identity $xzxtxysy\approx xzxtyxsy$.
Then $M(xzxtxysy)\in\mathbf V$ by Lemma~\ref{L: swapping in linear-balanced}.
Let $\mathbf X^\prime$ denote the monoid variety defined by the identity 
\[
\mathbf u:=xs_1xs_2z_1t_1z_2t_2\,cz_1bz_2\,xcy\,b\,s_3y\approx xs_1xs_2z_1t_1z_2t_2\,cz_1bz_2\,ycx\,b\,s_3y=:\mathbf v.
\]
We need the following auxiliary result.

\begin{claim}
\label{C: X' 2}
The words $\mathbf u$ and $\mathbf v$ can only form an identity of $\mathbf X^\prime$ with each other.
\end{claim}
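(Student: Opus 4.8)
The plan is to imitate the proof of Claim~\ref{C: X'}. Take $\mathbf w\in\{\mathbf u,\mathbf v\}$ and an arbitrary identity $\mathbf w\approx\mathbf w'$ that holds in $\mathbf X^\prime$; by Proposition~\ref{P: deduction} and an evident induction it suffices to treat the case when $\mathbf w\approx\mathbf w'$ is directly deducible from $\mathbf u\approx\mathbf v$, so that $(\mathbf w,\mathbf w')=(\mathbf a\varphi(\mathbf u)\mathbf b,\mathbf a\varphi(\mathbf v)\mathbf b)$ for some words $\mathbf a,\mathbf b$ and a substitution $\varphi$, and one must show $\mathbf w'\in\{\mathbf u,\mathbf v\}$. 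As in Claim~\ref{C: X'}, every factor of $\mathbf w$ of length greater than $1$ occurs in $\mathbf w$ exactly once, while among the letters of $\mathbf w$ the letter $x$ is the only one occurring three times, each of $z_1,z_2,c,b,y$ occurs exactly twice, and $s_1,s_2,t_1,t_2,s_3$ are simple. This yields~($\ast$): $\varphi(v)$ is the empty word or a letter for every $v\in\mul(\mathbf u)$. The argument then splits on the value of $\varphi(x)$.

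First I would dispose of the case $\varphi(x)=1$. Here $\varphi(\mathbf u)$ and $\varphi(\mathbf v)$ can differ only in the factor $\varphi(c)\varphi(y)$ versus $\varphi(y)\varphi(c)$, so if $\varphi(c)=1$ or $\varphi(y)=1$ these words coincide and $\mathbf w'=\mathbf w$; otherwise $\varphi(c)$ and $\varphi(y)$ are distinct letters (by~($\ast$) and an occurrence count), and a gap argument gives a contradiction. Once $\varphi(x)$ vanishes, the two images of $c$ in $\varphi(\mathbf u)$ are separated only by $\varphi(z_1)\varphi(b)\varphi(z_2)$, a word of length at most $3$, and no occurrence of the letter $\varphi(c)$ can lie strictly between them (otherwise one of $z_1,z_2,b$ would map to $\varphi(c)$ and $\varphi(c)$ would occur at least four times in $\mathbf w$); so $\mathbf w$ has two consecutive occurrences of $\varphi(c)$ at most three letters apart. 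Scanning the letters of $\mathbf u$ and of $\mathbf v$ that occur at least twice, the only such close pairs are the first two occurrences of $x$ (separated by $s_1$) and, in $\mathbf u$, the two occurrences of $y$ (separated by $bs_3$); in either case $\varphi(z_1)\varphi(b)\varphi(z_2)$ would be $s_1$ or $bs_3$, forcing one of $\varphi(z_1),\varphi(b),\varphi(z_2)$ to be a simple letter of $\mathbf w$ --- impossible, since $z_1,z_2,b$ all occur twice.

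The case $\varphi(x)\ne1$ is where the real work lies. As $x$ is the only letter occurring at least three times, ($\ast$) forces $\varphi(x)=x$, and then the three occurrences of $x$ in $\varphi(\mathbf u)$ exhaust those of $\mathbf w$; hence $\mathbf a$, $\mathbf b$ and every other image of $\varphi$ are $x$-free, so $\mathbf a=1$ (because $\mathbf w$ begins with $x$) and $\varphi(s_1)=s_1$. It remains to prove $\mathbf w\ne\mathbf v$ and that $\varphi$ then acts as the identity on the remaining letters, giving $\mathbf w'=\varphi(\mathbf v)=\mathbf v$. To this end I would study the factor of $\mathbf w$ lying strictly between the second and the third occurrence of $x$: it equals $\varphi(s_2)\varphi(z_1)\varphi(t_1)\varphi(z_2)\varphi(t_2)\varphi(c)\varphi(z_1)\varphi(b)\varphi(z_2)$, and it is $s_2z_1t_1z_2t_2cz_1bz_2$ in $\mathbf u$ but $s_2z_1t_1z_2t_2cz_1bz_2yc$ in $\mathbf v$. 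Using this together with the letters adjacent to the third $x$ (which in $\mathbf u$ is preceded by $z_2$ and followed by $c$, while in $\mathbf v$ it is preceded by $c$ and followed by $b$), a short case analysis over the possible values of $\varphi$ on $c,z_1,z_2,b$ shows that accommodating the two extra letters $yc$ always contradicts the occurrence bounds --- for instance it forces one of $c,b,y$ to occur three times or $\varphi(c)$ to have length greater than one --- so $\mathbf w=\mathbf u$; then matching $\varphi(\mathbf u)$ against $\mathbf u$ pins $\varphi$ down to the identity and forces $\mathbf b=1$, completing the argument.

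The step I expect to be the main obstacle is precisely this case $\varphi(x)=x$. In Claim~\ref{C: X'} the pivot letter $c$ of the transposed factor is the letter occurring three times, and the block between its second and third occurrences has the fixed length $4$, so the behaviour of $\varphi$ on $z_1,b,z_2,x$ falls out in one line; here the triply-occurring letter $x$ is one of the two letters being swapped, the gap between its outer occurrences is long and not rigid, and $\varphi$ has several possible ways to stretch the inner blocks, so excluding $\mathbf w=\mathbf v$ and recovering the rigidity of $\varphi$ requires the more delicate occurrence bookkeeping sketched above rather than a one-step length comparison. The innocuous-looking sub-case $\varphi(y)=1$, where $\varphi(\mathbf u)$ and $\varphi(\mathbf v)$ genuinely differ even after $\varphi(x)=x$, also needs its own short check.
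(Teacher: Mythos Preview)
Your split on $\varphi(x)$ rather than on $\varphi(c)$ looks plausible, and your treatment of the case $\varphi(x)=1$ is correct.  The case $\varphi(x)=x$, however, contains a genuine error: the assertion that a case analysis on $\varphi(c),\varphi(z_1),\varphi(z_2),\varphi(b)$ forces $\mathbf w=\mathbf u$, and that $\varphi$ is then pinned down to the identity, is false.  Take $\mathbf a=\mathbf b=1$ and
\[
\varphi\colon\ x\mapsto x,\ s_1\mapsto s_1,\ s_2\mapsto s_2z_1t_1z_2t_2,\ t_1\mapsto 1,\ t_2\mapsto z_1bz_2y,\ s_3\mapsto bs_3y,\ z_2\mapsto c,\ z_1,c,b,y\mapsto 1.
\]
Then $\varphi(x)=x$ but $\varphi(\mathbf u)=\mathbf v$, so $\mathbf w=\mathbf v$; here $\varphi(\mathbf u)=\varphi(\mathbf v)$, so the claim itself is not violated, but your route to it is.  A symmetric substitution (with $\varphi(z_2)=z_2$, $\varphi(t_2)=t_2cz_1b$, $\varphi(s_3)\mathbf b=cybs_3y$) gives $\varphi(\mathbf u)=\mathbf u$ with $\varphi$ far from the identity.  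The ``two extra letters $yc$'' are absorbed without any occurrence-count violation because the simple letters $s_2,t_1,t_2,s_3$ have unconstrained images; occurrence bookkeeping alone cannot close the case.  Your final remark about the sub-case $\varphi(y)=1$ is also off: when additionally $\varphi(c)=1$ one has $\varphi(\mathbf u)=\varphi(\mathbf v)$, so they do \emph{not} ``genuinely differ''.

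The paper avoids all this by pivoting on $\varphi(c)$.  The point is that in the source word the two occurrences of $c$ are separated by $z_1bz_2x$, a factor with no simple letters; hence in $\mathbf w$ the two images of $c$ are occurrences of the letter $\varphi(c)$ with only multiple letters between them, and the only multiple letters of $\mathbf w$ with that property between their first and second occurrences are $b$ and $c$.  This immediately gives $\varphi(c)\in\{b,c\}$, after which $\varphi(c)=b$ is excluded by a short argument and $\varphi(c)=c$ forces $(\varphi(x),\varphi(y))=(x,y)$ and $\mathbf w=\mathbf u$.  Pivoting on $c$ is what supplies the rigidity your sketch lacks: the two $c$'s sit in a short simple-letter-free window, whereas the three $x$'s span almost all of $\mathbf w$ and leave $\varphi$ too much freedom on the simple letters.
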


\begin{proof}
Take $\mathbf w\in\{\mathbf u,\mathbf v\}$ and consider an arbitrary identity of the form $\mathbf w\approx \mathbf w^\prime$ that holds in $\mathbf X^\prime$.
We are going to verify that $\mathbf w^\prime \in\{\mathbf u,\mathbf v\}$.
By Proposition~\ref{P: deduction} and evident induction, we may assume without any loss that the identity $\mathbf w\approx \mathbf w^\prime$ is directly deducible from the identity $\mathbf u\approx \mathbf v$, i.e., there exist some words $\mathbf a,\mathbf b \in \mathcal X^\ast$ and substitution $\varphi\colon \mathcal X \to \mathcal X^\ast$ such that $(\mathbf w, \mathbf w^\prime ) = ( \mathbf a\varphi(\mathbf u)\mathbf b,\mathbf a\varphi(\mathbf v)\mathbf b )$.
Notice that every subword of $\mathbf w$ of length $>1$ occurs in $\mathbf w$ exactly once.
It follows that
\begin{itemize}
\item[\textup{($\ast$)}] $\varphi(v)$ is either the empty word or a letter for any $v\in\mul(\mathbf u)$.
\end{itemize}

Suppose first that $\varphi(c)=1$.
In this case, if $\varphi(x)=1$ or $\varphi(y)=1$, then $\varphi(\mathbf u)=\varphi(\mathbf v)$ and so $\mathbf w^\prime\in\{\mathbf u,\mathbf v\}$, as required.
So, we may assume that $\varphi(x)\ne1$ and $\varphi(y)\ne1$.
Then $\varphi(x)$ and $\varphi(y)$ are letters by~($\ast$). 
Since $x$ is the only letter that occurs thrice in both $\mathbf u$ and $\mathbf v$, we have $\varphi(x)=x$.
It follows that the image of ${_{1\mathbf w}}y$ under $\varphi$ must be either ${_{2\mathbf w}}c$ (if $\mathbf w=\mathbf u$) or ${_{2\mathbf w}}b$ (if $\mathbf w=\mathbf v$), a contradiction.

Suppose now that $\varphi(c)\ne1$.
Then $\varphi(c)\in\{b,c\}$ because $b$ and $c$ are the only multiple letters of $\mathbf w$ between the first and second occurrences of which there are no simple letters.
If $\varphi(c)=b$, then 
\[
\varphi(z_1bz_2x)=
\begin{cases}
z_2xcy&\text{if } \mathbf w=\mathbf u,\\
z_2ycx&\text{if } \mathbf w=\mathbf v.
\end{cases}
\] 
and, by~($\ast$), $\varphi(b)\in\{x,y\}$.
However, this contradicts the fact that there are simple letters between ${_{2\mathbf w}}x$ and ${_{3\mathbf w}}x$ as well as between ${_{1\mathbf w}}y$ and ${_{2\mathbf w}}y$ in $\mathbf w$.
Hence $\varphi(c)=c$ and so
\[
\varphi(z_1bz_2x)=
\begin{cases}
z_1bz_2x&\text{if } \mathbf w=\mathbf u,\\
z_1bz_2y&\text{if } \mathbf w=\mathbf v.
\end{cases}
\]
Further, it follows from~($\ast)$ that $\varphi(b)=b$, whence 
\[
(\varphi(x),\varphi(y))=
\begin{cases}
(x,y)&\text{if } \mathbf w=\mathbf u,\\
(y,x)&\text{if } \mathbf w=\mathbf v.
\end{cases}
\]
Since $({_{1\mathbf u}}x)<({_{2\mathbf u}}x)<({_{3\mathbf u}}x)<({_{1\mathbf u}}y)<({_{2\mathbf u}}y)$ and $({_{1\mathbf v}}x)<({_{2\mathbf v}}x)<({_{1\mathbf v}}y)<({_{3\mathbf v}}x)<({_{2\mathbf v}}y)$, it follows that the case when $\mathbf w=\mathbf v$ is impossible.
Therefore, $(\varphi(x),\varphi(y))=(x,y)$ in either case. 
This implies that $\mathbf w^\prime =\mathbf v$, and we are done.
\end{proof}

Put $\mathbf X:=\mathbf M(\mathbf u,\mathbf v)\wedge\mathbf X^\prime$.
Consider an arbitrary identity of the form $\mathbf u\approx \mathbf w$ that is satisfied by the variety $\mathbf X\vee \mathbf V$.
In view of Claim~\ref{C: X' 2}, $\mathbf w\in \{\mathbf u,\mathbf v\}$.
Since $M(xzxtxysy)\in\mathbf V$, Lemma~\ref{L: M(W) in V} implies that $\mathbf w(x,y,s_1,s_2,s_3)=\mathbf u(x,y,s_1,s_2,s_3)=xs_1xs_2xys_3y$.
Hence $\mathbf w = \mathbf u$.
We see that $\mathbf u$ is an isoterm for the variety $\mathbf X\vee \mathbf V$.
By a similar argument, we can show that $\mathbf v$ is an isoterm for $\mathbf X\vee \mathbf V$ as well.
Then 
\[
\left(\mathbf X\vee \mathbf V\right)\wedge \mathbf M(\mathbf u,\mathbf v) = \mathbf M(\mathbf u,\mathbf v)
\]
by Lemma~\ref{L: M(W) in V}.
It is easy to see that $xzxtxysy\approx xzxtyxsy$ holds in $M(\mathbf u,\mathbf v)$.
Recall that $\mathbf V$ satisfies the identities~\eqref{xx=xxx,xxy=yxx} and $\mathbf a_{n,m}[\rho]\approx  \mathbf a_{n,m}^\prime[\rho]$ for all $n,m\in \mathbb N$ and $\rho\in S_{n+m}$.
Then $\mathbf V\wedge\mathbf M(\mathbf u,\mathbf v)$ satisfies $\mathbf u\stackrel{\mathbf V}\approx \mathbf u_cc^2\stackrel{\mathbf M(\mathbf u,\mathbf v)}\approx \mathbf v_cc^2\stackrel{\mathbf V}\approx \mathbf v$.
By the very definition, the identity $\mathbf u\approx\mathbf v$ is satisfied by $\mathbf X$ as well.
Since $\mathbf X\subset\mathbf M(\mathbf u,\mathbf v)$, we have
\[
\mathbf X\vee\left(\mathbf V\wedge\mathbf M(\mathbf u,\mathbf v)\right)\subset \left(\mathbf X\vee \mathbf V\right)\wedge \mathbf M(\mathbf u,\mathbf v) = \mathbf M(\mathbf u,\mathbf v),
\]
contradicting the fact that $\mathbf V$ is a modular element of $\Mon$.
Therefore, $\mathbf V$ satisfies $xzxtxysy\approx xzxtyxsy$.
By the dual argument, we can show that $xzxytysy\approx xzyxtysy$ holds in $\mathbf V$.

\smallskip

Implication (i) $\Rightarrow$ (ii) is thus proved.

\smallskip

(ii) $\Rightarrow$ (iii). 
If $M(xyx)\notin\mathbf V$, then it follows from Lemma~3.3(i) in~\cite{Gusev-Vernikov-21} that $\mathbf V$ coincides with one of the varieties $\mathbf M(\emptyset)$, $\mathbf M(1)$, $\mathbf M(x)$ or $\mathbf M(xy)$, and we are done.
Assume now that $M(xyx)\in\mathbf V$.
Denote by $\mathscr W$ the set of all words in $\{xzxyty,\,xt_1x\cdots t_nx\mid n\in\mathbb N\}$ that are isoterms for $\mathbf V$. 
Notice that, in this case, the variety $\mathbf M(\mathscr W)$ coincides with one of the varieties 
\begin{align*}
&\mathbf M(xt_1x),\,\dots,\, \mathbf M(xt_1x\cdots t_nx),\,\dots,\,\mathbf M(\{xt_1x\cdots t_nx\mid n\in\mathbb N\}),\\
&\mathbf M(xzxyty,xt_1x),\,\dots,\, \mathbf M(xzxyty,xt_1x\cdots t_nx),\,\dots,\,\mathbf M(\{xzxyty,xt_1x\cdots t_nx\mid n\in\mathbb N\}).
\end{align*}
According to Lemma~\ref{L: M(W) in V}, $\mathbf M(\mathscr W)\subseteq \mathbf V$. 
It remains to verify that $\mathbf V\subseteq \mathbf M(\mathscr W)$. 
Arguing by contradiction, suppose that $\mathbf M(\mathscr W)\ne \mathbf V$. 
Then there is an identity $\sigma$ which holds in $\mathbf M(\mathscr W)$ but does not hold in $\mathbf V$. 
Proposition 3.15 in~\cite{Gusev-Vernikov-21} and its proof allow us to assume that $\sigma$ coincides with either the identity
\begin{equation}
\label{one letter in a block}
x\biggl(\prod_{i=1}^nt_ix\biggr)\approx x^2\biggl(\prod_{i=1}^nt_i\biggr),
\end{equation} 
where $n\in\mathbb N$, or the identity
\begin{equation}
\label{two letters in a block}
\biggl(\prod_{i=1}^k a_it_i\biggr) xy \biggl(\prod_{i=k+1}^{k+\ell} t_i a_i\biggr)\approx\biggl(\prod_{i=1}^k a_it_i\biggr) yx  \biggl(\prod_{i=k+1}^{k+\ell} t_ia_i\biggr),
\end{equation} 
where $k,\ell\ge0$ and $\{a_1,\dots,a_{k+\ell}\}=\{x,y\}$.
Notice that an identity of the form~\eqref{two letters in a block} does not hold in $\mathbf V$ if and only if it coincides (up to renaming of letters) with the identity $xzxyty\approx xzyxty$.
If $\sigma$ equals to~\eqref{one letter in a block}, the word $xt_1x\cdots t_nx$ is an isoterm for $\mathbf V$~by Lemma 3.3(ii) in~\cite{Gusev-Vernikov-21}.
Then this word must belong to the set $\mathscr W$, contradicting our assumption that $\sigma$ is satisfied by $\mathbf M(\mathscr W)$.
If $\sigma$ coincides with the identity $xzxyty\approx xzyxty$, then the word $xzxyty$ is an isoterm for $\mathbf V$ by Lemma~\ref{L: swapping in linear-balanced}.
But this contradicts our assumption that $\sigma$ holds in $\mathbf M(\mathscr W)$.
Therefore, $\mathbf V= \mathbf M(\mathscr W)$, and we are done.

\smallskip

(iii) $\Rightarrow$ (i). 
It follows from~\cite[Theorem~1.1]{Gusev-Lee-21} that the varieties $\mathbf M(\emptyset)$, $\mathbf M(1)$, $\mathbf M(x)$ and $\mathbf M(xy)$ are modular elements of $\Mon$.
So, we may further assume that $\mathbf V=\mathbf M(\mathscr W)$ for some $\mathscr W\subseteq\{xzxyty,xt_1x\cdots t_nx\mid n\in\mathbb N\}$.
In particular, $M(xyx)\in\mathbf V$.

Arguing by contradiction, suppose that $\mathbf V$ is not a modular element of the lattice $\Mon$. 
This means that there are monoid varieties $\mathbf X$ and $\mathbf Y$ such that $\mathbf X\vee \mathbf V=\mathbf Y\vee \mathbf V$, $\mathbf X\wedge \mathbf V=\mathbf Y\wedge \mathbf V$ but $\mathbf X\subset\mathbf Y$.

We need the following auxiliary result.

\begin{claim}
\label{C: X=Y}
Let $\mathbf u\approx \mathbf v$ be an identity of $\mathbf X$. 
If $\mathbf u\approx \mathbf v$ holds in $\mathbf M(\{xt_1x\cdots t_nx\mid n\in\mathbb N\})$, then $\mathbf u\approx \mathbf v$ is satisfied by $\mathbf Y$ as well.
\end{claim}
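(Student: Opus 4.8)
Write $\mathbf J:=\mathbf M(\{xt_1x\cdots t_nx\mid n\in\mathbb N\})$, let $\mathbf D$ be the variety defined by the identity $\mathbf u\approx\mathbf v$, and let $\mathbf E$ be the variety defined by $xzxyty\approx xzyxty$. By hypothesis $\mathbf X\subseteq\mathbf D$ and $\mathbf J\subseteq\mathbf D$, and the goal is to prove $\mathbf Y\subseteq\mathbf D$. Since $\mathbf X\subseteq\mathbf Y\subseteq\mathbf Y\vee\mathbf V=\mathbf X\vee\mathbf V$, it would be enough to show $\mathbf V\subseteq\mathbf D$; the plan is to prove exactly this except in one narrowly delimited situation, which is then settled with the help of the meet relation $\mathbf X\wedge\mathbf V=\mathbf Y\wedge\mathbf V$.

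First I would observe that each word $xt_1x\cdots t_nx$ is an isoterm for $\mathbf D$: since $M(xt_1x\cdots t_nx)\in\mathbf J\subseteq\mathbf D$, any identity $xt_1x\cdots t_nx\approx\mathbf s$ of $\mathbf D$ holds in $M(xt_1x\cdots t_nx)$, and evaluating it under the substitution that fixes every letter forces $\mathbf s=xt_1x\cdots t_nx$. By Lemma~\ref{L: M(W) in V} this gives $\mathbf M(\mathscr W\setminus\{xzxyty\})\subseteq\mathbf D$, whence $\mathbf V=\mathbf M(\mathscr W)\subseteq\mathbf D$ — and then $\mathbf Y\subseteq\mathbf X\vee\mathbf V\subseteq\mathbf D$, so we are done — unless $xzxyty\in\mathscr W$ and $M(xzxyty)\notin\mathbf D$.

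So suppose $xzxyty\in\mathscr W$ and $M(xzxyty)\notin\mathbf D$. Then $M(xzxyty)\notin\mathbf X$ (because $\mathbf X\subseteq\mathbf D$), while $M(xzxyty)\in\mathbf V$; hence $M(xzxyty)\notin\mathbf X\wedge\mathbf V=\mathbf Y\wedge\mathbf V$, and since $M(xzxyty)\in\mathbf V$ this forces $M(xzxyty)\notin\mathbf Y$. Next I would check that both $\mathbf X$ and $\mathbf Y$ lie in $\mathbf E$. For $\mathbf Y$: if $M(xyx)\in\mathbf Y$, then Lemma~\ref{L: swapping in linear-balanced}, applied with $\mathbf p:=xz$ and $\mathbf q:=ty$ (so that $\occ_x(\mathbf p\mathbf q)=\occ_y(\mathbf p\mathbf q)=1$), yields $\mathbf Y\models xzxyty\approx xzyxty$; and if $M(xyx)\notin\mathbf Y$, then by Lemma~3.3(i) in~\cite{Gusev-Vernikov-21} the variety $\mathbf Y$ satisfies $x^2y\approx xyx\approx yx^2$, hence $x\mathbf wx\approx x^2\mathbf w$ for every word $\mathbf w$, so both $xzxyty$ and $xzyxty$ are equal to $x^2zyty$ in $\mathbf Y$. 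The argument for $\mathbf X$ is identical. Now put $\mathbf V':=\mathbf M(\mathscr W\setminus\{xzxyty\})$; then $\mathbf V'\subseteq\mathbf J\subseteq\mathbf D$. Since $\mathbf Y\subseteq\mathbf X\vee\mathbf V$ and $\mathbf Y\subseteq\mathbf E$, we have $\mathbf Y\subseteq(\mathbf X\vee\mathbf V)\wedge\mathbf E$, so it remains to prove that $(\mathbf X\vee\mathbf V)\wedge\mathbf E\subseteq\mathbf X\vee\mathbf V'$: this gives $\mathbf Y\subseteq\mathbf X\vee\mathbf V'\subseteq\mathbf D$.

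This last inclusion is, I expect, the crux and the principal obstacle. Because $\Mon$ is not modular, it is not a formal lattice fact — the reverse inclusion $\mathbf X\vee\mathbf V'\subseteq(\mathbf X\vee\mathbf V)\wedge\mathbf E$ always holds (using $\mathbf X\subseteq\mathbf E$ and $\mathbf V'\subseteq\mathbf V\wedge\mathbf E$, the latter because $\mathbf V'\subseteq\mathbf J\subseteq\mathbf E$), and what is at stake is precisely the failure of the modular law at this triple. It must therefore be extracted from the concrete structure of the varieties $\mathbf M(\mathscr W)$: the key point is that $\mathbf V\wedge\mathbf E=\mathbf V'$ — reflecting the facts that $\mathbf V$ is the join of $\mathbf V'$ with $\mathbf M(xzxyty)$ and that every proper factor of $xzxyty$ which is an isoterm for $\mathbf E$ has at most one letter occurring more than once, hence already lies in $\mathbf V'$ — together with the stability of this absorption under the join with $\mathbf X$, which is itself contained in $\mathbf E$. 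Making this precise, with Lemma~\ref{L: M(W) in V} and Lemma~\ref{L: swapping in linear-balanced} and the explicit list of the varieties $\mathbf M(\mathscr W)$ as the tools, is the real content of the claim.
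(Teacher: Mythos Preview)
Your outline tracks the paper's argument closely up to the point where $M(xzxyty)\notin\mathbf Y$ is established, but it has two genuine gaps.

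First, your treatment of the case $M(xyx)\notin\mathbf Y$ appeals to Lemma~3.3(i) of~\cite{Gusev-Vernikov-21} to conclude that $\mathbf Y$ satisfies $x^2y\approx xyx\approx yx^2$. That lemma is only invoked elsewhere in the paper for varieties already known to satisfy $x^2\approx x^3$ and $x^2y\approx yx^2$; there is no reason the pentagon witness $\mathbf Y$ should satisfy these. The paper does \emph{not} use this shortcut: instead it observes that $\mathbf Y$ satisfies some identity $xyx\approx x^pyx^q$ with $p>1$ or $q>1$, then shows (via Lemma~\ref{L: swapping in linear-balanced} applied to $\mathbf X\vee\mathbf M(xyx)$) that $\mathbf X$ satisfies $x^pzx^qyty\approx x^pzyx^qty$; since $\mathbf V$ satisfies this too, so does $\mathbf Y\subset\mathbf X\vee\mathbf V$, and combining with $xyx\approx x^pyx^q$ yields $xzxyty\approx xzyxty$ in $\mathbf Y$.

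Second, and more seriously, you explicitly leave unproved the inclusion $(\mathbf X\vee\mathbf V)\wedge\mathbf E\subseteq\mathbf X\vee\mathbf V'$, calling it ``the real content of the claim''. This is indeed the heart of the matter, and your remarks about $\mathbf V\wedge\mathbf E=\mathbf V'$ do not close the gap: you are asking precisely for the modular law at the triple $(\mathbf X,\mathbf V,\mathbf E)$, and abstract lattice reasoning cannot supply it. The paper's solution is concrete and syntactic. Write $\mathbf u=\mathbf u_0t_1\mathbf u_1\cdots t_m\mathbf u_m$ with $\simple(\mathbf u)=\{t_1,\dots,t_m\}$ (and similarly for $\mathbf v$), and use the identity $xzxyty\approx xzyxty$ to rewrite each block $\mathbf u_i$ as $\mathbf p_i\mathbf q_i$, where $\mathbf p_i$ collects the first occurrences in $\mathbf u$ and $\mathbf q_i$ the non-first ones. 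The resulting identity $\mathbf u'\approx\mathbf v'$ then holds in $M(xzxyty)$ by construction, hence in $\mathbf V$; since it also holds in $\mathbf X$ (which satisfies both $\mathbf u\approx\mathbf v$ and the rewriting identity), it holds in $\mathbf X\vee\mathbf V\supseteq\mathbf Y$. Finally $\mathbf Y$, satisfying $xzxyty\approx xzyxty$, recovers $\mathbf u\approx\mathbf v$ from $\mathbf u'\approx\mathbf v'$. This block-reordering is the missing idea in your proposal.
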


\begin{proof}
If the identity $\mathbf u\approx \mathbf v$ holds in $\mathbf V$, then it is also holds in $\mathbf X\vee\mathbf V$ and so in $\mathbf Y$ because $\mathbf X\vee \mathbf V=\mathbf Y\vee \mathbf V$.
So, we may further assume that $\mathbf V$ violates $\mathbf u\approx \mathbf v$.
This is only possible when $M(xzxyty)$ violates $\mathbf u\approx \mathbf v$ and $M(xzxyty)\in\mathbf V$ because $\mathbf V\subseteq\mathbf M(\{xzxyty,xt_1x\cdots t_nx\mid n\in\mathbb N\})$ and $\mathbf u\approx \mathbf v$ is satisfied by $M(\{xt_1x\cdots t_nx\mid n\in\mathbb N\})$.
Then $M(xzxyty)\notin\mathbf Y$ because $M(xzxyty)\in\mathbf X$ otherwise, contradicting the fact that $M(xzxyty)$ violates $\mathbf u\approx \mathbf v$.

If $M(xyx)\in\mathbf Y$, then $\mathbf Y$ satisfies $xzxyty\approx xzyxty$ by Lemma~\ref{L: swapping in linear-balanced}.
Now let $M(xyx)\notin\mathbf Y$.
Then Lemma~\ref{L: M(W) in V} implies that the variety $\mathbf Y$ satisfies an identity $xyx\approx x^pyx^q$ with either $p>1$ or $q>1$.
Further, it follows from Lemma~\ref{L: swapping in linear-balanced} that $\mathbf X\vee\mathbf M(xyx)$ satisfies $xzxyty\approx xzyxty$.
Then $\mathbf X$ satisfies also the identity $x^pzx^qyty\approx x^pzyx^qty$.
Evidently, the last identity holds in the variety $\mathbf V$ as well.
Therefore, it is satisfied by $\mathbf Y$ because $\mathbf Y\subset\mathbf X\vee\mathbf V$.
Thus, we see that the identity $xzxyty\approx xzyxty$ holds in the variety $\mathbf Y$ in any case.

%If the identity $\mathbf u\approx \mathbf v$ is satisfied by $M(xzxyty)$, then $\mathbf V\subseteq\mathbf M(\{xzxyty,xt_1x\cdots t_nx\mid n\in\mathbb N\})$.
%In this case, $\mathbf u\approx \mathbf v$ holds in $\mathbf X\vee\mathbf V=\mathbf Y\vee\mathbf V$ and so in $\mathbf Y$ as required.
%So, we may further assume that $M(xzxyty)$ violates $\mathbf u\approx \mathbf v$.
%Then $M(xzxyty)\notin \mathbf X$.

Let $\mathbf u=\mathbf u_0t_1\mathbf u_1\cdots t_m\mathbf u_m$, where $t_1,\dots,t_m$ are all the simple letters of the word $\mathbf u$ and $\mathbf u_0,\dots,\mathbf u_m\in\mathscr X^\ast$.
Then, by Lemma~\ref{L: id M(xy)}, $\simple(\mathbf v)=\{t_1,\dots,t_m\}$ and $\mathbf v=\mathbf v_0t_1\mathbf v_1\cdots t_m\mathbf v_m$ for some $\mathbf v_0,\dots,\mathbf v_m\in\mathscr X^\ast$.

It is easy to see that the identity $xzxyty\approx xzyxty$ implies the identities
\[
\mathbf u\approx \mathbf p_0\mathbf q_0t_1 \mathbf p_1\mathbf q_1\cdots t_m \mathbf p_m\mathbf q_m=:\mathbf u^\prime,\ \ 
\mathbf v\approx \mathbf p_0^\prime\mathbf q_0^\prime t_1 \mathbf p_1^\prime\mathbf q_1^\prime\cdots t_m \mathbf p_m^\prime\mathbf q_m^\prime=:\mathbf v^\prime
\]
where the word $\mathbf p_i$ [respectively, $\mathbf q_i$] is obtained from the word $\mathbf u_i$ by retaining only the first [respectively, non-first] occurrences of letters in $\mathbf u$, while the word $\mathbf p_i^\prime$ [respectively, $\mathbf q_i^\prime$] is obtained from the word $\mathbf v_i$ by retaining only the first [respectively, non-first] occurrences of letters in $\mathbf v$.
By the very construction of the word $\mathbf u^\prime$ and $\mathbf v^\prime$, the identity $\mathbf u^\prime\approx \mathbf v^\prime$ holds in the monoid $M(xzxyty)$ and so in the variety $\mathbf V$.
Since $\mathbf X$ satisfies $xzxyty\approx xzyxty$ and $\mathbf u\approx \mathbf v$, the identity $\mathbf u^\prime\approx \mathbf v^\prime$ also holds in $\mathbf X$.  
Then $\mathbf Y$ satisfies $\mathbf u^\prime\approx \mathbf v^\prime$ as $\mathbf Y\subseteq\mathbf X\vee\mathbf V$.
It follows that $\mathbf u\approx \mathbf v$ holds in $\mathbf Y$ because the identities $\mathbf u\approx \mathbf u^\prime$ and $\mathbf v\approx \mathbf v^\prime$ are consequences of $xzxyty\approx xzyxty$.
\end{proof}

It follows from Lemma~\ref{L: M(W) in V} and Claim~\ref{C: X=Y} that there is the least $k\in\mathbb N$ such that the word $xt_1x\cdots t_kx$ is not an isoterm for $\mathbf X$. 
Then $\mathbf X$ satisfies an non-trivial identity $xt_1x\cdots t_kx\approx \mathbf w$ for some $\mathbf w\in\mathscr X^\ast$.
It follows from Lemma~\ref{L: id M(xy)} that $\mathbf w=x^{e_0}t_1x^{e_1}\cdots t_kx^{e_k}$ for some $e_0,\dots,e_k\ge0$. 
Since $k$ is the least natural number for which the word $xt_1x\cdots t_kx$ is not an isoterm for $\mathbf X$, at least one of the numbers $e_0,\dots,e_k$ must exceed $1$.

Now consider an arbitrary identity $\mathbf a\approx \mathbf b$ of $\mathbf X$.
Applying the identity $xt_1x\cdots t_kx\approx \mathbf w$ to the words $\mathbf a$ and $\mathbf b$, we can replace their subwords of the form $x\mathbf f_1x\cdots \mathbf f_kx$ to $x^{e_0}\mathbf f_1x^{e_1}\cdots \mathbf f_kx^{e_k}$.
In other words, the identity $xt_1x\cdots t_kx\approx \mathbf w$ can be used to convert the words $\mathbf a$ and $\mathbf b$ into some words $\mathbf a^\prime$ and $\mathbf b^\prime$, respectively, so that the identity $\mathbf a^\prime\approx\mathbf b^\prime$ holds in $M(\{xt_1x\cdots t_nx\mid n\in\mathbb N\})$ (because the word $xt_1x\cdots t_{k-1}x$ is an isoterm for $\mathbf X$ and at least one of the numbers $e_0,\dots,e_k$ exceeds $1$).
Therefore, the variety $\mathbf X$ can be defined by $\{xt_1x\cdots t_kx\approx \mathbf w\}\cup\Sigma$ for some set $\Sigma$ of identities holding in $M(\{xt_1x\cdots t_nx\mid n\in\mathbb N\})$.

Further, if $M(xt_1x\cdots t_kx)\in\mathbf V$, then $M(xt_1x\cdots t_kx)\notin\mathbf Y$ because $\mathbf X\wedge \mathbf V=\mathbf Y\wedge \mathbf V$. 
In this case, it is easy to deduce from Lemmas~\ref{L: M(W) in V} and~\ref{L: id M(xy)} that $\mathbf Y$ satisfies a non-trivial identity $xt_1x\cdots t_kx\approx x^{f_0}t_1x^{f_1}\cdots t_kx^{f_k}=:\mathbf w^\prime$, where at least one of the numbers $f_0,\dots,f_k$ exceeds $1$.
Since $\mathbf X\subset\mathbf Y$, the identity $\mathbf w\approx \mathbf w^\prime$ holds in $\mathbf X$.
Clearly, this identity is satisfied by $\mathbf V$ as well.
Hence $\mathbf X\vee\mathbf V=\mathbf Y\vee \mathbf V$ satisfies $\mathbf w\approx \mathbf w^\prime$ and, therefore, $\mathbf Y$ satisfies $xt_1x\cdots t_kx\approx \mathbf w$.

Suppose now that $M(xt_1x\cdots t_kx)\notin\mathbf V$.
In this case, it is easy to deduce from Lemmas~\ref{L: M(W) in V} and~\ref{L: id M(xy)} that $\mathbf V$ satisfies the identity $xt_1x\cdots t_kx\approx \mathbf w$.
Since $\mathbf X\vee\mathbf V=\mathbf Y\vee \mathbf V$, we see that this identity holds in $\mathbf Y$. 

According to Claim~\ref{C: X=Y}, all the identities in $\Sigma$ are satisfied by $\mathbf Y$, contradicting $\mathbf X\subset \mathbf Y$. 
Therefore, $\mathbf V$ is a modular element of  of the lattice $\Mon$.
Theorem~\ref{T: modular} is thus proved.\qed

\small

\end{document}